\newcommand{\mytitle}{Double-normal pairs in space}
\def\MT@register@subst@font{\MT@exp@one@n\MT@in@clist\font@name\MT@font@list
   \ifMT@inlist@\else\xdef\MT@font@list{\MT@font@list\font@name,}\fi}
\theoremstyle{plain}
\newtheorem{theorem}{Theorem}
\newtheorem{lemma}[theorem]{Lemma}
\newtheorem{corollary}[theorem]{Corollary}
\newtheorem{proposition}[theorem]{Proposition}
\theoremstyle{definition}
\newcommand{\Prob}[1]{\operatorname{Pr}\left[#1\right]}
\newcommand{\BigProb}[1]{\operatorname{Pr}\Bigl[#1\Bigr]}
\newcommand{\Expec}[1]{\operatorname{E}\left[#1\right]}
\newcommand{\setbuilder}[2]{\left\{#1\;\colon\,#2\right\}}
\newcommand{\set}[1]{\left\{#1\right\}}
\newcommand{\epsi}{\varepsilon}
\newcommand{\fhi}{\varphi}
\newcommand{\norm}[1]{\lVert#1\rVert}
\newcommand{\ipr}[2]{\langle #1, #2 \rangle}
\newcommand{\abs}[1]{\left\lvert#1\right\rvert}
\newcommand{\myangle}{\angle}
\newcommand{\length}[1]{\lvert#1\rvert}
\newcommand{\card}[1]{\left\lvert#1\right\rvert}
\newcommand{\numbersystem}[1]{\mathbb{#1}}
\newcommand{\bN}{\numbersystem{N}}
\newcommand{\bR}{\numbersystem{R}}
\DeclareMathOperator{\conv}{conv}
\DeclareMathOperator{\diam}{diam}
\DeclareMathOperator{\bd}{bd}
\DeclareMathOperator{\lin}{lin}
\newcommand{\vect}[1]{\bm{#1}}
\newcommand{\va}{\vect{a}}
\newcommand{\vb}{\vect{b}}
\newcommand{\vc}{\vect{c}}
\newcommand{\vd}{\vect{d}}
\newcommand{\vo}{\vect{o}}
\newcommand{\vp}{\vect{p}}
\newcommand{\vq}{\vect{q}}
\newcommand{\vu}{\vect{u}}
\newcommand{\vv}{\vect{v}}
\newcommand{\vx}{\vect{x}}
\newcommand{\vy}{\vect{y}}
\newcommand{\vz}{\vect{z}}
\newcommand{\Chi}{\vect{\chi}}
\newcommand{\define}[1]{\emph{#1}}
\title{\mytitle}
\author{J\'anos Pach\thanks{Research partially supported by Swiss National Science Foundation Grants 200021-137574 and 200020-144531, by Hungarian Science Foundation Grant OTKA NN 102029 under the EuroGIGA programs ComPoSe and GraDR, and by NSF grant CCF-08-30272.}\\
EPFL Lausanne and \\
R\'enyi Institute, Budapest\\
\href{mailto:pach@cims.nyu.edu}{\texttt{pach@cims.nyu.edu}}
\and 
Konrad J.\ Swanepoel\\
Department of Mathematics,\\ London School of Economics and Political Science,\\ Houghton Street, London WC2A 2AE, United Kingdom\\
\href{mailto:k.swanepoel@lse.ac.uk}{\texttt{k.swanepoel@lse.ac.uk}}
}
\date{}
\begin{document}
\maketitle

\begin{abstract}
A \define{double-normal pair} of a finite set $S$ of points from $\bR^d$ is a pair of points $\set{\vp,\vq}$ from $S$ such that $S$ lies in the closed strip bounded by the hyperplanes through $\vp$ and $\vq$ perpendicular to~$\vp\vq$.
A double-normal pair $\vp\vq$ is \define{strict} if $S\setminus\set{\vp,\vq}$ lies in the open strip.
The problem of estimating the maximum number $N_d(n)$ of double-normal 
pairs in a set of $n$ points in $\bR^d$, was initiated by Martini and 
Soltan (2006).

It was shown in a companion paper that in the plane, this maximum is $3\lfloor n/2\rfloor$, for every $n>2$.
For $d\geq 3$, it follows from the Erd\H{o}s-Stone theorem 
 in extremal graph theory that $N_d(n)=\frac12(1-1/k)n^2 + o(n^2)$ for a suitable 
positive integer $k=k(d)$. 
Here we prove that $k(3)=2$ and, in general,
$\lceil d/2\rceil \leq k(d)\leq d-1$. Moreover, asymptotically
we have $\lim_{n\rightarrow\infty}k(d)/d=1$.
The same bounds
hold for the maximum number of strict double-normal pairs.
\end{abstract}

\section{Introduction}
Let $V$ be a set of $n$ points in $\bR^d$.
A \define{double-normal pair} of $V$ is a pair of points $\set{\vp,\vq}$ in $V$ such that $V$ lies in the closed strip bounded by the hyperplanes $H_{\vp}$ and $H_{\vq}$ through $\vp$ and $\vq$, respectively, that are perpendicular to $\vp\vq$.
A double-normal pair $\vp\vq$ is \define{strict} if $V\setminus\set{\vp,\vq}$ is disjoint from the hyperplanes $H_{\vp}$ and $H_{\vq}$.
Define the \define{double-normal graph} of $V$ as the graph on the vertex set $V$ in which two vertices $p$ and $q$ are joined by an edge if and only if $\{p,q\}$ is a double-normal pair.
The number of edges of this graph, that is, the number of double-normal pairs induced by $V$ is denoted by $N(V)$.

We define the \define{strict double-normal graph} of $V$ analogously and denote its number of edges by $N'(V)$.

Martini and Soltan \cite[Problems~3 and 4]{martini-soltan-2005} asked for the maximum numbers $N_d(n)$ and $N_d'(n)$ of double-normal pairs and strict double-normal pairs of a set of $n$ points in $\bR^d$:
\[ N_d(n) := \max_{\substack{V\subset \bR^d\\ \card{V}=n}} N(V)\]
and
\[ N_d'(n) := \max_{\substack{V\subset \bR^d\\ \card{V}=n}} N'(V).\]
Clearly, we have $N(V)\geq N'(V)$ and $N_d(n)\geq N_d'(n)$.
It is not difficult to see that $N_2'(n)=n$.
In another paper \cite{PS} we show that $N_2(n)=3\lfloor n/2\rfloor$.
Here we only consider the case $d\geq 3$.
\begin{theorem}\label{thm:3d}
The maximum number of double-normal and strict double-normal pairs in a set of $n$ points in $\bR^3$ satisfy $N_3(n) = n^2/4 + o(n^2)$ and $N_3'(n) = n^2/4 + o(n^2)$.
\end{theorem}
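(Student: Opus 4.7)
The plan is to handle the two bounds separately. The upper bound $N_3(n)\le \tfrac{n^2}{4}+o(n^2)$ will follow from the Erd\H{o}s--Stone--Simonovits theorem together with the geometric lemma that there exists an absolute constant $t$ such that the double-normal graph of every finite $V\subset\bR^3$ contains no copy of the complete tripartite graph $K_{t,t,t}$. Since $\chi(K_{t,t,t})=3$, Erd\H{o}s--Stone--Simonovits then gives $\mathrm{ex}(n,K_{t,t,t})=\tfrac{n^2}{4}+o(n^2)$. The lower bound $N_3'(n)\ge\tfrac{n^2}{4}+o(n^2)$ (which a fortiori implies the same for $N_3$) will come from an explicit spherical construction yielding $\lfloor n/2\rfloor\lceil n/2\rceil$ strict double-normal pairs.

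To prove the geometric lemma, I would use the convex-analytic characterisation of double-normal pairs: $\{\vp,\vq\}$ is double-normal if and only if the direction $\vp-\vq$ lies in the outer normal cone of $\conv(V)$ at $\vp$ (and dually at $\vq$). Assume for contradiction that pairwise-disjoint sets $A,B,C\subseteq V$, each of size $t$, satisfy that every cross-pair is a double-normal pair. Then at each $\va\in A$ the normal cone of $\conv(V)$ at $\va$ must contain all $2t$ directions $\vb-\va$ and $\vc-\va$ with $\vb\in B$, $\vc\in C$; analogous statements hold at the vertices of $B$ and $C$. Since the normal cones of $\conv(V)$ tile $S^2$, a careful spherical pigeonhole or Ramsey-type argument on how these cones and their antipodes interact on $S^2$ should yield a geometric incompatibility once $t$ is a sufficiently large absolute constant. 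This is the main technical obstacle, and it must exploit the specifically three-dimensional geometry, since the analogous statement fails in $\bR^d$ for $d\ge 4$ (consistent with the general lower bound $k(d)\ge\lceil d/2\rceil$).

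For the lower-bound construction, fix two points $\vp,\vq\in\bR^3$ with $r=\norm{\vp-\vq}$. Let $V_1$ consist of $\lfloor n/2\rfloor$ points on a very small convex cap of the sphere of radius $r$ centred at $\vq$, clustered near its apex $\vp$, and symmetrically let $V_2$ consist of $\lceil n/2\rceil$ points on a very small convex cap of the sphere of radius $r$ centred at $\vp$, clustered near $\vq$; set $V=V_1\cup V_2$. For any cross-pair $(\vv_1,\vv_2)\in V_1\times V_2$, when the caps are chosen small enough, $\vv_1-\vv_2$ is arbitrarily close to $\vv_1-\vq$. On the sphere centred at $\vq$, the functional $\vx\mapsto\vx\cdot(\vv_1-\vq)$ is uniquely maximised at $\vv_1$, so $\vv_1$ also uniquely maximises $\vx\cdot(\vv_1-\vv_2)$ over $V_1$. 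For $\vx\in V_2$ one has $(\vv_1-\vx)\cdot(\vv_1-\vv_2)\approx r^2>0$, so $\vv_1$ is still the unique maximiser over all of $V$. A symmetric argument handles $\vv_2$, making $\{\vv_1,\vv_2\}$ a strict double-normal pair. Summing over pairs yields the claimed lower bound.
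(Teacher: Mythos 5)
Your high-level plan coincides with the paper's: both halves reduce, via Erd\H{o}s--Stone, to showing $k(3)=k'(3)=2$, i.e.\ that large complete tripartite graphs do not embed in double-normal graphs in $\bR^3$ while large complete bipartite ones do, strictly. However, both of your two key steps have genuine problems.

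For the upper bound, the assertion that some absolute $t$ makes every double-normal graph in $\bR^3$ $K_{t,t,t}$-free is precisely the hard content of the theorem, and your proposal does not prove it: ``a careful spherical pigeonhole or Ramsey-type argument \dots should yield a geometric incompatibility'' is a placeholder, not an argument. It is far from clear that pigeonholing normal cones of $\conv(V)$ on $S^2$ can work on its own: the same normal-cone picture is available for \emph{antipodal} pairs, for which the analogous asymptotic question in $\bR^3$ is open, so any successful argument must use the perpendicularity in the definition of a double-normal pair and not merely the cone containments. (Also, your remark that the statement ``fails in $\bR^d$ for $d\ge4$'' is not known for $d=4$: the lower bound only gives $k(4)\ge2$, and the paper states explicitly that whether $k(4)=2$ or $3$ is open.) The paper's proof of $k(d)\le d-1$ (Theorem~\ref{thm:upperbound} via Proposition~\ref{prop:vectors}) is a substantial multi-step argument: a Ramsey-type selection producing nearly collinear triples in each class, a pruning algorithm controlling the relative diameters of the classes, three quantitative near-orthogonality lemmas, and a compactness argument producing $k+1$ pairwise orthogonal vectors in $\bR^d$. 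None of this is supplied or replaced by your sketch.

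For the lower bound, the construction as stated does not work, and the flaw is in the sentence ``so $\vv_1$ also uniquely maximises $\ipr{\vx}{\vv_1-\vv_2}$ over $V_1$'': unique maximisation of a linear functional over a finite set is not stable under perturbing the functional, and here the perturbation $\vq-\vv_2$ is comparable in size to the gaps between nearby points of $V_1$. Concretely, put $\vq$ at the origin and $\vp=(r,0,0)$, and suppose $V_1$ contains $\vv_1=\vp$ and $\vx=r(\cos\alpha,-\sin\alpha,0)$ while $V_2$ contains $\vv_2=(r(1-\cos\beta),r\sin\beta,0)$; all three points lie on the prescribed caps. Then $\ipr{\vx-\vv_1}{\vv_1-\vv_2}=r^2\bigl(\cos(\alpha-\beta)-\cos\beta\bigr)>0$ whenever $0<\alpha<2\beta$, so $\vx$ lies strictly outside the slab on $\vv_1$'s side and $\{\vv_1,\vv_2\}$ is not even a non-strict double-normal pair. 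Since each class has $\sim n/2$ points on a fixed cap, you cannot arrange that every within-class gap in $V_1$ exceeds $\diam(V_2)$ \emph{and} vice versa, so this failure is unavoidable for generic placements. Repairing it requires additional structure, which is exactly what the paper's Theorem~\ref{thm:construct} (specialised to $m=2$, giving Corollary~\ref{cor1}) provides: the two classes are placed on circular arcs lying in \emph{orthogonal} planes through the line $\vp_1\vp_2$, and the points on each arc are chosen recursively (each new point beyond the foot of a suitable perpendicular) so that all the within-class angles $\myangle\vz\vx_t\vx_s$ are acute for every relevant $\vz$.
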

In fact, since the collection of double-normal graphs in Euclidean space is closed under the taking of induced subgraphs, the Erd\H{o}s--Stone~Theorem~\cite{ErdosStone} implies that for each $d\in\bN$, there exist unique $k(d),k'(d)\in\bN$ such that $N_d(n) = \frac{1}{2}(1-\frac{1}{k(d)})n^2 + o(n^2)$ and $N_d'(n) = \frac{1}{2}(1-\frac{1}{k'(d)})n^2 + o(n^2)$.
The number $k(d)$ [resp.\ $k'(d)$] can also be characterised as the largest $k$ such that complete $k$-partite graphs with arbitrarily many points in each class occur as subgraphs of double-normal [resp.\ strictly double-normal] graphs in $\bR^d$.
Theorem~\ref{thm:3d} states that $k(3)=k'(3)=2$ and is a special case of the next theorem.
\begin{theorem}\label{thm:highd}
For each $d$, there exist unique integers $k(d), k'(d)\geq 1$ such that $N_d(n)$, the maximum number of double-normal pairs, and $N_d'(n)$, the maximum number of strict double-normal pairs in a set of $n$ points in $\bR^d$, satisfy \[N_d(n) = \frac{1}{2}\Bigl(1-\frac{1}{k(d)}\Bigr)n^2 + o(n^2)\] and \[N_d'(n) = \frac{1}{2}\Bigl(1-\frac{1}{k'(d)}\Bigr)n^2 + o(n^2).\]
For any $d\geq 3$, we have \[\lceil d/2\rceil\leq k'(d)\leq k(d)\leq d-1.\]
Asymptotically, as $d\to\infty$, we have \[k(d)\geq k'(d)\geq d-O(\log d).\]
\end{theorem}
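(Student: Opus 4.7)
Define $k(d)$ to be the largest integer $k$ such that the balanced complete $k$-partite graph $K_{k[t]}$ occurs as a subgraph of some double-normal graph in $\bR^d$ for every $t\in\bN$, and define $k'(d)$ analogously for strict double-normal graphs. The lower bound $N_d(n)\geq\tfrac{1}{2}(1-1/k(d))n^2+o(n^2)$ is witnessed by realisations of $K_{k(d)[\lfloor n/k(d)\rfloor]}$, and the matching upper bound follows from the Erd\H{o}s--Stone--Simonovits theorem: by maximality of $k(d)$ there exists $t_0$ such that $K_{(k(d)+1)[t_0]}$ is not a subgraph of any double-normal graph in $\bR^d$, so every double-normal graph on $n$ vertices has at most $\mathrm{ex}(n,K_{(k(d)+1)[t_0]})=\tfrac{1}{2}(1-1/k(d))n^2+o(n^2)$ edges. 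The formula for $N'_d(n)$ follows identically.

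\textbf{Upper bound $k(d)\leq d-1$.} I would prove that $K_{d[t]}$ fails to be a subgraph of any double-normal graph in $\bR^d$ once $t>t_0(d)$. Suppose for contradiction some $V\subset\bR^d$ contains classes $V_1,\ldots,V_d\subseteq V$ with $|V_i|=t$ such that every cross-pair $\{p,q\}$, $p\in V_i,q\in V_j$ ($i\neq j$), is a double-normal pair of $V$. Fixing one representative $v_i\in V_i$ per class yields a $d$-clique of mutually double-normal pairs whose $\binom{d}{2}$ perpendicular-through-endpoint hyperplanes confine $V$ to a bounded polytope. As the representatives vary independently within their classes, the accumulated strip constraints force each $V_i$ to lie in an affine subspace of low codimension; a dimension count then bounds $|V_i|$ by a function of $d$ alone, contradicting $t$ large. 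The key combinatorial-geometric ingredient is a structural lemma on clusters of mutually double-normal points in $\bR^d$, in the spirit of classical bounds on strictly antipodal sets.

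\textbf{Lower bound $\lceil d/2\rceil\leq k'(d)$.} For the lower bound I would give an explicit construction. In even dimension $d=2m$ take $m$ pairwise orthogonal $2$-planes $\Pi_1,\ldots,\Pi_m$ through the origin, and in each $\Pi_i$ place $t$ distinct points on the unit circle; call the resulting class $V_i$. A direct inner-product computation, exploiting the orthogonality of distinct $\Pi_i,\Pi_j$, shows that for every cross-pair $p\in V_i$, $q\in V_j$ ($i\neq j$) and every $r\in V\setminus\{p,q\}$ one has $0<\ipr{r-p}{q-p}<\norm{q-p}^2$; hence every cross-pair is strictly double-normal, realising $K_{m[t]}$ strictly, while intra-class pairs are readily seen not to be double-normal. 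For odd $d=2m+1$, a mild modification, incorporating one extra class using the remaining perpendicular direction and a small adjustment of the other clusters, yields $\lceil d/2\rceil$ classes in total.

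\textbf{Asymptotic bound $k'(d)\geq d-O(\log d)$.} The main obstacle, and the technical heart of the proof, is improving the lower bound from $\lceil d/2\rceil$ to $d-O(\log d)$; the orthogonal-planes construction wastes two dimensions per class. The plan is to replace orthogonality by sufficient angular separation: take $N=d-O(\log d)$ cluster centres as vertices of a binary code $C\subseteq\{\pm1\}^d$ of large minimum Hamming distance (via a Gilbert--Varshamov or a probabilistic construction), and inflate each centre into a small $t$-point cluster whose internal perturbations lie in directions nearly orthogonal to the displacements between the other centres. Strict double-normality of every cross-pair reduces to a system of inequalities between pairwise codeword inner products and cluster perturbations, which can be satisfied simultaneously provided the perturbation scale is taken small compared to the minimum angular separation; a careful union-bound or random-perturbation argument verifies that all $\Theta(N^2)$ strip conditions can be met jointly using only $d$ degrees of freedom.
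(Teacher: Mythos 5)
Your reduction to the Erd\H{o}s--Stone theorem is the same as the paper's and is fine, and your orthogonal-planes construction is a correct (and pleasantly simple) way to realise a strict $K_m(N)$ in $\bR^{2m}$: for unit vectors $\vp\in\Pi_i$, $\vq\in\Pi_j$ one indeed gets $\ipr{\vr-\vp}{\vq-\vp}=\ipr{\vr}{\vq}-\ipr{\vr}{\vp}+1\in(0,2)=(0,\norm{\vq-\vp}^2)$ for every other $\vr$, provided no class contains an antipodal pair. But this only yields $k'(d)\geq\lfloor d/2\rfloor$, and the odd case is not a ``mild modification'': the leftover single dimension cannot carry a class, since a collinear class $\set{s\ve\colon s\in T}$ forces $st<t^2$ for all $s,t\in T$, which fails as soon as $\card{T}\geq 2$. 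The paper gets $\lceil d/2\rceil$ by a genuinely different device (Theorem~\ref{thm:construct}): the classes are circular arcs in planes $\vp_i+\lin\set{\vu_i,\vv_i}$ that \emph{share} directions $\vu_i$ inside the base space and use only one fresh dimension each, with condition~\eqref{cond} controlling the intra-class versus cross-class interactions. The same issue sinks your asymptotic argument as sketched: with $N=d-O(\log d)$ codeword centres in $\set{\pm1}^d$ there are only $O(\log d)$ spare dimensions, so you cannot give each cluster its own perturbation direction, and the proposal never says how arbitrarily large clusters (you need $K_m(N)$ for \emph{every} $N$, not small $t$) can be packed so that each intra-class point respects the strip conditions of all cross-pairs. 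The paper resolves exactly this by placing only $m\approx e^{d/20}$ centres in $\bR^d$ (so the base space has logarithmically many dimensions relative to $m$) and then spending one fresh coordinate per class in $\bR^{d+m}$; the recursive choice of points on each arc is what makes the classes infinite.

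The more serious gap is the upper bound $k(d)\leq d-1$. Your plan --- that the $\binom{d}{2}$ strips confine each $V_i$ to a low-codimension affine subspace, whence ``a dimension count bounds $\card{V_i}$'' --- does not work: membership in an affine subspace puts no bound whatsoever on the number of points, and nothing in the strip conditions forces the classes into proper subspaces in the first place (the classes in any valid construction are full-dimensional in general position). The actual obstruction is metric, not affine. The paper's proof first applies a geometric Ramsey-type theorem and a pruning algorithm to extract, from each class of a large $K_d(N)$, a near-collinear triple $\va_i,\vb_i,\vc_i$ with $\myangle\va_i\vb_i\vc_i>\pi-\epsi$ and with the diameters of successive classes decaying geometrically; the double-normal conditions then force the direction $\vu_{i,i}$ of $\va_i-\vb_i$ to be nearly orthogonal both to every $\vb_j-\vb_i$ and to every other $\vu_{j,j}$ (Lemmas~\ref{dn1}--\ref{dn3}, the last needed precisely when the relevant angle degenerates). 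Letting $\epsi\to0$ and passing to limits produces $k$ pairwise orthogonal unit vectors together with one more direction $\vp_i-\vp_j$ orthogonal to all of them, giving $k+1\leq d$. None of this is visible in your sketch, so the upper bound must be counted as unproved.
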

Although this theorem gives the exact values $k(3)=k'(3)=2$, we do not know whether $k(4)$ or $k'(4)$ equals $2$ or $3$.

Two notions related to double-normal pairs have been studied before.
We define a \define{diameter pair} of $S$ to be a pair of points $\{\vp,\vq\}$ in $S$ such that $\length{\vp\vq}=\diam(S)$.
Note that a diameter pair is also a strictly double-normal pair.
The maximum number of diameter pairs in a set of $n$ points is known for all $d\geq 2$, and in the case of $d\geq 4$, if $n$ is sufficiently large \cite{Erdos60, Grmax, Heppesmax, Strasmax, sw-lenz, Kupavskii}.
We call a pair $\vp\vq$ of a set $S\subset\bR^d$ \define{antipodal} if there exist parallel hyperplanes $H_1$ and $H_2$ through $\vp$ and $\vq$, respectively, such that $S$ lies in the closed strip bounded by the hyperplanes.
The pair is called \define{strictly antipodal} if there exist parallel hyperplanes through $\vp$ and $\vq$ such that $S\setminus\set{\vp,\vq}$ lies in the open strip bounded by the hyperplanes.
Clearly, a (strictly) double-normal pair of a set is also a (strictly) antipodal pair.
The problem of determining the asymptotic behaviour of the maximum number of antipodal or strictly antipodal pairs in a set of $n$ points is open already in $\bR^3$.
For a thorough discussion of antipodal pairs, see the series of papers \cite{MM-I, MM-II, MM-III}.

The paper is structured as follows.
In Section~\ref{sect:3.5}, we collect some geometric lemmas on double-normal pairs.
They are applied in Section~\ref{sect:5} together with a Ramsey-type argument to derive the upper bound of Theorem~\ref{thm:highd} (Theorem~\ref{thm:upperbound}).
Finally, in Section~\ref{sect:6} we show the two lower bounds of Theorem~\ref{thm:highd} (Corollaries~\ref{cor1} and \ref{cor2}).
The asymptotic lower bound follows from a random construction closely related to the construction by Erd\H{o}s and F\"uredi \cite{EF} of strictly antipodal sets of size exponential in the dimension.

We use the following notation.
The inner product of $\vx,\vy\in\bR^d$ is denoted by $\ipr{\vx}{\vy}$, the linear span of $S\subset\bR^d$ by $\lin S$, the convex hull of $S$ by $\conv S$, the diameter of $S$ by $\diam(S)$, the cardinality of a finite set $S$ by $\card{S}$, and the complete $k$-partite graph with $N$ vertices in each class by $K_k(N)$.
An angle with vertex $\vb$ and sides $\vb\va$ and $\vb\vc$ is denoted by $\myangle\va\vb\vc$, which we also use to denote its angular measure.
All angles in this paper have angular measure in the range $(0,\pi)$.
The Euclidean distance between $\vp$ and $\vq$ is denoted $\norm{\vp-\vq}$.

\section{Geometric properties of the double-normal relation}\label{sect:3.5}
Here we collect some elementary geometric properties of double-normals pairs.
They will be used in the next section where we find upper bounds to $k(d)$.

If a unit vector $\vu$ is almost orthogonal to two given unit vectors $\vu_1$ and $\vu_2$, then $\vu$ is still almost orthogonal to any unit vector in the span of $\vu_1$ and $\vu_2$, with an error that becomes worse the closer $\vu_1$ and $\vu_2$ are to each other.
The next lemma quantifies this observation.
\begin{lemma}\label{iprestimate}
Let $\vu,\vu_1,\vu_2$ be unit vectors with $\vu_1\neq\pm\vu_2$, such that for some $\epsi_1,\epsi_2>0$, $\abs{\ipr{\vu}{\vu_1}}\leq\epsi_1$ and $\abs{\ipr{\vu}{\vu_2}}\leq\epsi_2$.
Then for any unit vector $\vv\in\lin\set{\vu_1,\vu_2}$ we have $\abs{\ipr{\vu}{\vv}}<(\epsi_1+\epsi_2)/\sin\theta$, where $\theta\in(0,\pi)$ satisfies $\ipr{\vu_1}{\vu_2}=\cos\theta$.
\end{lemma}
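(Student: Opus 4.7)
The plan is to express $\vv$ in the (not necessarily orthogonal) basis $\{\vu_1,\vu_2\}$ of the span and then apply the triangle inequality to $\ipr{\vu}{\vv}$, reducing everything to bounding the two coefficients. Since $\vu_1\neq\pm\vu_2$, we have $\cos\theta\neq\pm 1$, so $\vu_1$ and $\vu_2$ are linearly independent and any $\vv\in\lin\set{\vu_1,\vu_2}$ can be written uniquely as $\vv=\alpha\vu_1+\beta\vu_2$. Taking the inner product with $\vu$ and applying the triangle inequality gives
\[
\abs{\ipr{\vu}{\vv}} \;\leq\; \abs{\alpha}\epsi_1 + \abs{\beta}\epsi_2,
\]
so the lemma reduces to the coefficient bound $\abs{\alpha},\abs{\beta}\leq 1/\sin\theta$.

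To establish the coefficient bound I would expand the unit length condition
\[
1 = \norm{\vv}^2 = \alpha^2 + 2\alpha\beta\cos\theta + \beta^2
\]
and complete the square as $(\alpha+\beta\cos\theta)^2+(\beta\sin\theta)^2=1$, which yields $\abs{\beta}\leq 1/\sin\theta$ immediately; the analogous completion in the other variable gives $\abs{\alpha}\leq 1/\sin\theta$. Combining with the triangle inequality bound then produces $\abs{\ipr{\vu}{\vv}}\leq(\epsi_1+\epsi_2)/\sin\theta$, which is the desired non-strict inequality.

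For the strict inequality I would argue that equality in both coefficient bounds cannot be attained simultaneously. Indeed $\abs{\beta}=1/\sin\theta$ forces $\alpha=-\beta\cos\theta$, so $\abs{\alpha}=\abs{\cos\theta}/\sin\theta<1/\sin\theta$ (using $\cos\theta\neq\pm1$). Since $\epsi_1,\epsi_2>0$, one of the two terms $\abs{\alpha}\epsi_1$, $\abs{\beta}\epsi_2$ is therefore strictly less than $\epsi_i/\sin\theta$, giving the strict bound.

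There is essentially no hard step here; the whole statement is a two-line linear-algebra computation, and the only subtlety is verifying that equality in the two coefficient bounds is incompatible when $\vu_1\neq\pm\vu_2$, which I handled above. The geometric intuition behind the factor $1/\sin\theta$ — the basis $\{\vu_1,\vu_2\}$ is ill-conditioned when $\vu_1$ and $\vu_2$ are nearly parallel — is exactly what the completion of the square captures.
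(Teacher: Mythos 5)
Your proof is correct, but it takes a genuinely different route from the paper. You decompose $\vv=\alpha\vu_1+\beta\vu_2$ and push everything onto the coefficient bounds $\abs{\alpha},\abs{\beta}\leq 1/\sin\theta$, obtained by completing the square in the Gram identity $\alpha^2+2\alpha\beta\cos\theta+\beta^2=1$; the strict inequality then follows from your (correct) observation that $\abs{\alpha}$ and $\abs{\beta}$ cannot both equal $1/\sin\theta$ when $\cos\theta\neq\pm1$, so with $\epsi_1,\epsi_2>0$ at least one summand in $\abs{\alpha}\epsi_1+\abs{\beta}\epsi_2$ falls strictly short. The paper instead decomposes $\vu$: it projects $\vu$ orthogonally onto $\lin\set{\vu_1,\vu_2}$ to get $\vu'$, notes that $\sup_{\vv}\abs{\ipr{\vu}{\vv}}=\norm{\vu'}$, traps $\vu'$ in the parallelogram cut out by the two strips $\abs{\ipr{\cdot}{\vu_i}}\leq\epsi_i$, and bounds the half-diagonal by the law of cosines, with strictness coming from $\pm\cos\theta<1$. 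The paper's argument is sharper as an intermediate estimate (it computes the exact maximum of $\norm{\vu'}$ over the feasible region, namely $\sqrt{\epsi_1^2+\epsi_2^2+2\epsi_1\epsi_2\abs{\cos\theta}}/\sin\theta$), whereas your triangle-inequality bound is looser but entirely coordinate-based and would extend more mechanically to spans of more than two vectors via the Gram matrix. Both are elementary and both fully establish the lemma.
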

\begin{proof}
Let $\vu'$ be the orthogonal projection of $\vu$ onto the plane $\lin\set{\vu_1,\vu_2}$.
Then the quantity $\ipr{\vu}{\vv}=\ipr{\vu'}{\vv}$ is maximised when $\vv$ is a positive multiple of $\vu'$, and then $\abs{\ipr{\vu}{\vv}}=\norm{\vu'}$.
It follows from the hypotheses that $\vu'$ lies in the parallelogram $P$ symmetric around $\vo$ with sides perpendicular to $\vu_1$ and $\vu_2$, respectively, and with the sides perpendicular to $\vu_i$ at distance $2\epsi_i$, $i=1,2$.
The sides of $P$ form an angle of $\theta$, and their lengths are $2\epsi_1/\sin\theta$ and $2\epsi_2/\sin\theta$.
The maximum value of $\norm{\vu'}$ is attained at a vertex of the parallelogram $P$, that is, $\norm{\vu'}$ is at most half the largest diagonal of $P$.
By the law of cosines, half a diagonal of $P$ has length
\begin{align*}
&\mathrel{\phantom{=}}\sqrt{\frac{\epsi_1^2}{\sin^2\theta}+\frac{\epsi_2^2}{\sin^2\theta}\pm2\frac{\epsi_1\epsi_2}{\sin^2\theta}\cos\theta}\\
&< \sqrt{\frac{\epsi_1^2}{\sin^2\theta}+\frac{\epsi_2^2}{\sin^2\theta}+2\frac{\epsi_1\epsi_2}{\sin^2\theta}}
=\frac{\epsi_1+\epsi_2}{\sin\theta}.\qedhere
\end{align*}
\end{proof}
Suppose that $\vy_1$, $\vy_2$, $\vy_3$ are collinear, with $\vy_2$ between $\vy_1$ and $\vy_3$, and that $\vx\vy_2$ is a double-normal pair in some set that contains $\vx,\vy_1,\vy_2,\vy_3$.
Then, since the segment $\vy_1\vy_3$ has to lie in the half-space through $\vy_2$ with normal $\vy_2\vx$, it follows that $\vy_1\vy_3$ lies in the boundary of this half-space.
That is, $\vx\vy_2\perp\vy_1\vy_2$.
If $\vy_1$, $\vy_2$, $\vy_3$ are close to collinear, then intuitively $\vy_1\vy_2$ will still be close to orthogonal to $\vx\vy_2$.
This is the content of the next lemma.
\begin{lemma}\label{dn1}
Let $\vx,\vy_1,\vy_2,\vy_3$ be different points from $V\subset\bR^d$, with $\vx\vy_2$ a double-normal pair in $V$.
Let $\epsi>0$ and suppose that $\myangle \vy_1\vy_2\vy_3>\pi-\epsi$.
Let $\vu$ be a unit vector parallel to $\vy_1\vy_2$ and $\vv$ a unit vector parallel to $\vx\vy_2$.
Then $\abs{\ipr{\vu}{\vv}}<\epsi$.
\end{lemma}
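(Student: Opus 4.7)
The plan is to exploit the double-normal condition, which confines both $\vy_1$ and $\vy_3$ to the closed half-space bounded by the hyperplane $H_{\vy_2}$ on the side of $\vx$, together with the near-collinearity hypothesis, which forces $\vy_1-\vy_2$ and $\vy_3-\vy_2$ to point in nearly opposite directions. These two facts combined should prevent either vector from having a large component along $\vx\vy_2$.

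Concretely, I would first fix the sign of $\vv$ by taking $\vv := (\vx-\vy_2)/\norm{\vx-\vy_2}$; this is harmless since only $\abs{\ipr{\vu}{\vv}}$ appears in the conclusion. Writing $\vu_i := (\vy_i-\vy_2)/\norm{\vy_i-\vy_2}$ for $i=1,3$, so that $\vu = \pm\vu_1$, the double-normal condition $\ipr{\vp-\vy_2}{\vv}\geq 0$ for every $\vp\in V$ immediately yields $\ipr{\vu_1}{\vv}\geq 0$ and $\ipr{\vu_3}{\vv}\geq 0$.

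Next I would use the angle hypothesis to bound $\norm{\vu_1+\vu_3}$. We may assume $\epsi\leq 1$, since otherwise the conclusion is trivial because $\abs{\ipr{\vu}{\vv}}\leq 1 < \epsi$. Then $\pi-\epsi\in[0,\pi]$, and monotonicity of $\cos$ on this interval gives $\ipr{\vu_1}{\vu_3} = \cos\myangle\vy_1\vy_2\vy_3 < \cos(\pi-\epsi) = -\cos\epsi$, so
\[ \norm{\vu_1+\vu_3}^2 = 2 + 2\ipr{\vu_1}{\vu_3} < 2 - 2\cos\epsi = 4\sin^2(\epsi/2). \]
The elementary inequality $\sin x < x$ for $x>0$ then gives $\norm{\vu_1+\vu_3} < 2\sin(\epsi/2) < \epsi$. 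Finally, since $\ipr{\vu_3}{\vv}\geq 0$, we have $\ipr{\vu_1}{\vv}\leq \ipr{\vu_1+\vu_3}{\vv}$, and by the Cauchy--Schwarz inequality (using $\norm{\vv}=1$),
\[ \abs{\ipr{\vu}{\vv}} = \ipr{\vu_1}{\vv} \leq \ipr{\vu_1+\vu_3}{\vv} \leq \norm{\vu_1+\vu_3} < \epsi. \]

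There is no real obstacle here; the argument is elementary once one realises that the double-normal condition traps $\vu_1$ and $\vu_3$ in a common closed half-space with inward normal $\vv$, so that their near-antipodality to each other forces near-orthogonality to $\vv$. The only mild subtlety is the range of $\epsi$, which is handled by the trivial observation that the conclusion is automatic when $\epsi > 1$.
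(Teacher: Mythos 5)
Your proof is correct and rests on the same two observations as the paper's: the double-normal condition forces $\ipr{\vu_1}{\vv}\geq 0$ and $\ipr{\vu_3}{\vv}\geq 0$, and the near-collinearity of $\vy_1,\vy_2,\vy_3$ then forces $\vu_1$ to be nearly orthogonal to $\vv$. The paper packages this as angle arithmetic ($\pi-\epsi<\myangle\vy_1\vy_2\vy_3\leq\myangle\vy_1\vy_2\vx+\pi/2$, hence $\myangle\vy_1\vy_2\vx\in(\pi/2-\epsi,\pi/2]$ and $\abs{\ipr{\vu}{\vv}}=\cos\myangle\vy_1\vy_2\vx<\sin\epsi<\epsi$), whereas you bound $\norm{\vu_1+\vu_3}<2\sin(\epsi/2)<\epsi$ and apply Cauchy--Schwarz; this is only a cosmetic difference in execution.
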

\begin{proof}
Without loss of generality, $\epsi < \pi/2$.
Note that $\myangle \vx\vy_2\vy_1, \myangle \vx\vy_2\vy_3\leq\pi/2$.
Since also
\[ \pi-\epsi < \myangle \vy_1\vy_2\vy_3\leq \myangle\vy_1\vy_2\vx+\myangle\vx\vy_2\vy_3\leq\myangle\vy_1\vy_2\vx+\pi/2,\]
we obtain
\[ \pi/2-\epsi < \myangle \vy_1\vy_2\vx\leq\pi/2,\]
and it follows that
\[ \abs{\ipr{\vu}{\vv}}=\cos\myangle\vy_1\vy_2\vx<\cos(\pi/2-\epsi)=\sin\epsi<\epsi.\qedhere\]
\end{proof}
Consider the situation where $\vy_1$, $\vy_2$, $\vy_3$ are ``almost'' collinear with $\vy_2$ the ``middle'' point, but now there are two double-normal pairs $\vx_1\vy_2$ and $\vx_2\vy_2$ in a set that contains $\vx_1,\vx_2,\vy_1,\vy_2,\vy_3$.
Then $\vy_1,\vy_2,\vy_3$ all lie inside the wedge $W$ formed by the intersection of the half-spaces $H_1$ and $H_2$ through $\vy_2$ with normals $\vx_1-\vy_2$ and $\vx_2-\vy_2$, respectively.
If $\vy_1$, $\vy_2$, $\vy_3$ are collinear with $\vy_2$ between $\vy_1$ and $\vy_3$, then necessarily $\vy_1,\vy_2,\vy_3$ all lie on the ``ridge'' $\bd H_1\cap \bd H_2$ of the wedge $W$, and $\vy_1\vy_2$ is orthogonal to the plane $\Pi$ through $\vx_1$, $\vx_2$, $\vy_2$.
If $\vy_1$, $\vy_2$, $\vy_3$ are close to collinear, then intuitively $\vy_1\vy_2$ will still be close to orthogonal to $\Pi$.
The next lemma quantifies this intuition.
It is an immediate consequence of Lemmas~\ref{iprestimate} and \ref{dn1}.
\begin{lemma}\label{dn2}
Let $\vx_1,\vx_2,\vy_1,\vy_2,\vy_3$ be different points in $V\subset\bR^d$, with $\vx_1\vy_2$ and $\vx_2\vy_2$ double-normal pairs in $V$.
Let $\epsi>0$.
Suppose that $\myangle \vy_1\vy_2\vy_3>\pi-\epsi$.
Then for any unit vector $\vu$ parallel to the line $\vy_1\vy_2$ and any unit vector $\vv$ parallel to the plane $\vx_1\vx_2\vy_2$ we have $\abs{\ipr{\vu}{\vv}}<2\epsi/\sin\myangle\vx_1\vy_2\vx_2$.
\end{lemma}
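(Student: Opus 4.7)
The plan is to combine the two earlier lemmas in the most direct way possible. Let $\vu_1$ be a unit vector parallel to $\vx_1\vy_2$ (say, pointing from $\vy_2$ to $\vx_1$) and $\vu_2$ a unit vector parallel to $\vx_2\vy_2$. Applying Lemma~\ref{dn1} to the double-normal pair $\vx_1\vy_2$ together with the triple $\vy_1,\vy_2,\vy_3$ gives $\abs{\ipr{\vu}{\vu_1}}<\epsi$, and applying it again to the double-normal pair $\vx_2\vy_2$ gives $\abs{\ipr{\vu}{\vu_2}}<\epsi$. So $\vu$ is simultaneously nearly orthogonal to $\vu_1$ and $\vu_2$, with the same error $\epsi$ on each side.

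Next I would feed this into Lemma~\ref{iprestimate} with $\epsi_1=\epsi_2=\epsi$. For this I need two small checks. First, any unit vector $\vv$ parallel to the plane through $\vx_1,\vx_2,\vy_2$ lies in $\lin\set{\vu_1,\vu_2}$, because that plane passes through $\vy_2$ and its direction subspace is spanned by $\vx_1-\vy_2$ and $\vx_2-\vy_2$. Second, the angle $\theta\in(0,\pi)$ defined by $\ipr{\vu_1}{\vu_2}=\cos\theta$ is precisely $\myangle\vx_1\vy_2\vx_2$, since $\vu_1$ and $\vu_2$ are the outward unit vectors from $\vy_2$ along the sides of this angle. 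Because angular measures in the paper are taken in $(0,\pi)$, we have $\sin\theta\neq 0$, so in particular $\vu_1\neq\pm\vu_2$ and Lemma~\ref{iprestimate} indeed applies.

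Plugging in then yields
\[
\abs{\ipr{\vu}{\vv}} < \frac{\epsi+\epsi}{\sin\theta} = \frac{2\epsi}{\sin\myangle\vx_1\vy_2\vx_2},
\]
which is the desired bound. I do not anticipate any real obstacle: the only thing that could go wrong is the non-degeneracy $\vu_1\neq\pm\vu_2$, i.e.\ the collinearity of $\vx_1,\vy_2,\vx_2$, but this is excluded by the implicit assumption that $\myangle\vx_1\vy_2\vx_2$ is a well-defined angle in $(0,\pi)$.
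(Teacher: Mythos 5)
Your argument is exactly the one the paper intends: the paper gives no written proof of Lemma~\ref{dn2} beyond noting that it is an immediate consequence of Lemmas~\ref{iprestimate} and \ref{dn1}, and your two applications of Lemma~\ref{dn1} followed by Lemma~\ref{iprestimate} with $\epsi_1=\epsi_2=\epsi$ and $\theta=\myangle\vx_1\vy_2\vx_2$ is precisely that deduction. The details you check (that $\vv\in\lin\set{\vu_1,\vu_2}$ and that non-collinearity of $\vx_1,\vy_2,\vx_2$ is implicit in the angle being well defined) are correct.
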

If the angle $\myangle\vx_1\vy_2\vx_2$ in the previous lemma is small, then the bound obtained may be too large to be useful.
In the next lemma, we show that we can still obtain a small upper bound if $\norm{\vy_1-\vy_2}$ is much smaller than $\norm{\vx_1-\vx_2}$.
We need four double-normal pairs instead of the two required by Lemma~\ref{dn2}, but we don not need $\vy_3$.
\begin{lemma}\label{dn3}
Let $\vx_i\vy_j$, $i,j=1,2$, be double-normal pairs in a set $V\subset\bR^d$ that contains $\vx_1,\vx_2,\vy_1,\vy_2$.
Let $\vu$ be a unit vector parallel to $\vy_1\vy_2$ and $\vv$ a unit vector parallel to the plane $\vx_1\vx_2\vy_2$.
Then \[\abs{\ipr{\vu}{\vv}}\leq\frac{\sqrt{2}}{\cos^2\myangle\vx_1\vy_2\vx_2}\frac{\norm{\vy_1-\vy_2}}{\norm{\vx_1-\vx_2}}.\]
\end{lemma}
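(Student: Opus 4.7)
Set $\va=\vx_1-\vy_2$, $\vb=\vx_2-\vy_2$, $\vd=\vy_1-\vy_2$, $A=\norm{\va}$, $B=\norm{\vb}$, $\theta=\myangle\vx_1\vy_2\vx_2$, and $\alpha=\ipr{\vd}{\va}$, $\beta=\ipr{\vd}{\vb}$.  My plan is to translate the four double-normal hypotheses into linear inequalities on $\alpha$, $\beta$, and $\cos\theta$, project $\vd$ onto the plane $\Pi=\lin\{\va,\vb\}$, bound the length of this projection, and then maximise over unit vectors $\vv\in\Pi$.  From $\vx_1\vy_2$ being double-normal with $\vy_1\in V$, the ``$\vy_2$-side'' condition $\ipr{\vy_1-\vy_2}{\vx_1-\vy_2}\geq 0$ gives $\alpha\geq 0$; from $\vx_1\vy_1$ being double-normal with $\vy_2\in V$, the ``$\vy_1$-side'' condition $\ipr{\vy_2-\vy_1}{\vx_1-\vy_1}\geq 0$ rewrites (via $\vx_1-\vy_1=\va-\vd$) as $-\alpha+\norm{\vd}^2\geq 0$, yielding $\alpha\leq\norm{\vd}^2$.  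Symmetric arguments with $\vx_2$ give $0\leq\beta\leq\norm{\vd}^2$.  Finally, applying the strip condition of $\vx_1\vy_2$ to $\vx_2$ and of $\vx_2\vy_2$ to $\vx_1$ yields $0\leq\cos\theta\leq\min(A,B)/\max(A,B)$.

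Let $\vd_\Pi=s\va+t\vb$ be the orthogonal projection of $\vd$ onto $\Pi$.  The Gram system $\alpha=sA^2+tAB\cos\theta$, $\beta=sAB\cos\theta+tB^2$ has determinant $A^2B^2\sin^2\theta$, and a short computation gives
\[\norm{\vd_\Pi}^2 = s\alpha+t\beta = \frac{B^2\alpha^2-2AB\alpha\beta\cos\theta+A^2\beta^2}{A^2B^2\sin^2\theta}.\]
Since $\alpha,\beta,\cos\theta\geq 0$ the cross term is non-positive, so using $\alpha,\beta\leq\norm{\vd}^2$ one obtains
\[\norm{\vd_\Pi}^2 \leq \frac{(A^2+B^2)\,\norm{\vd}^4}{A^2B^2\sin^2\theta}.\]

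The main step is the algebraic inequality
\[(A^2+B^2)(A^2+B^2-2AB\cos\theta)\cos^4\theta \leq 2A^2B^2\sin^2\theta\]
under the constraint extracted above.  Setting $c=\cos\theta$ and $r=\max(A,B)/\min(A,B)\in[1,1/c]$, and dividing by $A^2B^2$, this reduces to $(r+1/r)(r+1/r-2c)c^4\leq 2(1-c^2)$.  Viewed as a function of $u=r+1/r\geq 2$, the left-hand side $c^4u(u-2c)$ is increasing, so it suffices to check the extreme $r=1/c$ where $u=(1+c^2)/c$; there the inequality collapses to $c^2(1-c^4)\leq 2(1-c^2)$, i.e.\ $(1+c^2)c^2\leq 2$, which is trivially true for $c\in[0,1]$.

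Combining with the preceding display and using $\norm{\vx_1-\vx_2}^2=A^2+B^2-2AB\cos\theta$ produces $\norm{\vd_\Pi}\leq\sqrt{2}\,\norm{\vd}^2/(\norm{\vx_1-\vx_2}\cos^2\theta)$.  Since $\vv\in\Pi$ implies $\ipr{\vd}{\vv}=\ipr{\vd_\Pi}{\vv}$, we conclude $\abs{\ipr{\vu}{\vv}}=\abs{\ipr{\vd}{\vv}}/\norm{\vd}\leq\norm{\vd_\Pi}/\norm{\vd}$, matching the stated bound.  The main obstacle is the algebraic inequality above: the constraint $r\leq 1/c$ — encoding that $\vx_2$ lies in the strip of $\vx_1\vy_2$ and vice versa — is essential, as the inequality fails for large $r$ without it.
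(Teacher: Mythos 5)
Your proof is correct, and while it rests on the same geometric input as the paper's argument -- the non-obtuseness of the angles $\myangle\vx_1\vy_1\vy_2$, $\myangle\vx_1\vy_2\vy_1$, $\myangle\vx_2\vy_2\vy_1$ (and their $\vx_1\leftrightarrow\vx_2$ analogues), plus the fact that all angles of $\triangle\vx_1\vx_2\vy_2$ are non-obtuse -- the execution is genuinely different. The paper bounds $\ipr{\vu}{\vu_1}$ and $\ipr{\vu}{\vu_2}$ for the basis $\vu_1\parallel\vx_1-\vy_2$, $\vu_2\parallel\vx_1-\vx_2$ (the $\vu_2$ bound coming from adding inequalities \eqref{i1} and \eqref{i3}), then invokes the parallelogram estimate of Lemma~\ref{iprestimate} and finishes with the law of sines and $\sin\theta,\sin\fhi\geq\cos\alpha$. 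You instead work in the basis $\set{\vx_1-\vy_2,\vx_2-\vy_2}$, compute $\norm{\vd_\Pi}^2$ exactly from the Gram system, and reduce everything to the one-variable inequality $(1+c^2)c^2\leq 2$ after observing that the double-normal conditions applied to $\vx_1,\vx_2$ force $\cos\theta\leq\min(A,B)/\max(A,B)$ -- which is precisely the paper's "all angles of the triangle are non-obtuse" in quantitative form. Your route is self-contained (no appeal to Lemma~\ref{iprestimate}) and keeps the exact quadratic form rather than the parallelogram over-estimate, at the cost of a less transparent algebraic verification; the paper's route recycles a lemma it needs elsewhere and keeps the trigonometric meaning of each step visible. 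Both correctly identify that the constraint relating $\cos\theta$ to $A/B$ is essential -- without it the bound fails -- and both arrive at the same constant $\sqrt{2}$.
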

\begin{proof}
Let $\vu:=\norm{\vy_1-\vy_2}^{-1}(\vy_1-\vy_2)$, $\vu_1:=\norm{\vx_1-\vy_2}^{-1}(\vx_1-\vy_2)$ and $\vu_2:=\norm{\vx_1-\vx_2}^{-1}(\vx_1-\vx_2)$.
Then $\ipr{\vu_1}{\vu_2}=\cos\theta$ where $\theta:=\myangle\vx_2\vx_1\vy_2$.

Since the angles $\myangle\vx_1\vy_1\vy_2$, $\myangle\vx_1\vy_2\vy_1$, $\myangle\vx_2\vy_2\vy_1$ are non-obtuse, we obtain
\begin{align}
\ipr{\vx_1-\vy_1}{\vy_2-\vy_1}\geq 0, \label{i1}\\
\ipr{\vx_1-\vy_2}{\vy_1-\vy_2}\geq 0, \label{i2}\\
\intertext{and}
\ipr{\vy_2-\vx_2}{\vy_2-\vy_1}\geq 0. \label{i3}
\end{align}
From \eqref{i1} we obtain $\ipr{\vx_1-\vy_2}{\vy_2-\vy_1}\geq-\norm{\vy_1-\vy_2}^2$, that is, \[\ipr{\vu}{\vu_1}\leq\norm{\vy_2-\vy_1}/\norm{\vx_1-\vy_2}=:\epsi_1.\]
From \eqref{i2}, $\ipr{\vu}{\vu_1}\geq 0$.
Next, add \eqref{i1} and \eqref{i3} to obtain $\ipr{\vx_2-\vx_1}{\vy_2-\vy_1}\leq\norm{\vy_1-\vy_2}^2$, that is,
\[\ipr{\vu}{\vu_2}\leq\norm{\vy_1-\vy_2}/\norm{\vx_1-\vx_2}=:\epsi_2.\]
The analogues of \eqref{i1} and \eqref{i3} with $\vx_1$ and $\vx_2$ interchanged similarly give $-\ipr{\vu}{\vu_2}\leq\epsi_2$.
By Lemma~\ref{iprestimate}, for any unit vector $\vv$ parallel to the plane $\Pi$ through $\vx_1$, $\vx_2$, $\vy_2$, that is, with $\vv\in\lin\set{\vu_1,\vu_2}$, we have
\begin{equation}\label{i4}
\abs{\ipr{\vu}{\vv}} \leq \frac{\epsi_1+\epsi_2}{\sin\theta}.
\end{equation}
By the law of sines in $\triangle\vx_1\vx_2\vy_2$, 
\[ \frac{\epsi_1}{\epsi_2}=\frac{\norm{\vx_1-\vx_2}}{\norm{\vx_1-\vy_2}}=\frac{\sin\alpha}{\sin\fhi},\]
where $\fhi=\myangle\vx_1\vx_2\vy_2$ and $\alpha:=\myangle\vx_1\vy_2\vx_2$.
It follows from \eqref{i4} that
\[\abs{\ipr{\vu}{\vv}} \leq \frac{\epsi_2}{\sin\theta}\left(1+\frac{\sin\alpha}{\sin\fhi}\right).\]
Since $\alpha,\theta,\fhi\leq\pi/2$ and $\alpha+\theta+\fhi=\pi$,
we have \[\sin\theta,\sin\fhi\geq\sin(\pi/2-\alpha)=\cos\alpha,\] hence
\begin{align*}
\abs{\ipr{\vu}{\vv}} &\leq \frac{\epsi_2}{\cos\alpha}\left(1+\frac{\sin\alpha}{\cos\alpha}\right)
= \frac{\epsi_2}{\cos^2\alpha}(\cos\alpha+\sin\alpha)\\
&\leq \frac{\epsi_2}{\cos^2\alpha}\sqrt{2}
= \frac{\sqrt{2}}{\cos^2\alpha}\frac{\norm{\vy_1-\vy_2}}{\norm{\vx_1-\vx_2}}.\qedhere
\end{align*}
\end{proof}

\section{Upper bound on the number of double-normal pairs}\label{sect:5}
Recall that $k(d)$ denotes the largest $k$ such that for each $N\in\bN$, $K_k(N)$ is a subgraph of some double-normal graph in $\bR^d$.
\begin{theorem}\label{thm:upperbound}
For all $d\geq 3$, we have $k(d)\leq d-1$.
\end{theorem}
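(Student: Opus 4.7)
My plan is to suppose for contradiction that the complete $d$-partite graph $K_d(N)$ is a subgraph of some double-normal graph in $\bR^d$ for arbitrarily large $N$, with classes $V_1,\dots,V_d$ each of size $N$ and (after rescaling) $\diam(V)=1$. A contradiction is derived by combining a Ramsey-type pigeonhole inside each class with the geometric lemmas of Section~\ref{sect:3.5}.

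First I would apply Ramsey inside each $V_i$. The $\binom{N}{2}$ pair-directions in $V_i$ lie on the unit sphere $S^{d-1}$, which is covered by $O(\epsi^{-(d-1)})$ caps of angular radius $\epsi$; pigeonholing first on the cap containing a pair and then on a common endpoint of two pairs in the same cap, for $N$ large enough I obtain a triple $\vy_1^{(i)},\vy_2^{(i)},\vy_3^{(i)}\in V_i$ with $\myangle\vy_1^{(i)}\vy_2^{(i)}\vy_3^{(i)}>\pi-O(\epsi)$. Since every $\vx\in V\setminus V_i$ forms a double-normal pair with $\vy_2^{(i)}$, Lemma~\ref{dn1} yields $\bigl|\ipr{\vu_i}{(\vx-\vy_2^{(i)})/\norm{\vx-\vy_2^{(i)}}}\bigr|=O(\epsi)$, where $\vu_i$ is a unit vector in the direction of $\vy_1^{(i)}-\vy_2^{(i)}$; hence $V\setminus V_i$ lies in a slab of thickness $O(\epsi)$ perpendicular to $\vu_i$.

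Applying this for every $i$, each $V_j$ with $j\neq i$ is trapped in the slab perpendicular to $\vu_i$, while $V_j$ itself contains an approximate line segment in direction $\vu_j$. Projecting that segment onto $\vu_i$ forces its length to be at most $O(\epsi)/|\ipr{\vu_i}{\vu_j}|$ whenever the inner product is bounded away from zero. Passing to a subsequence with $N\to\infty$ and $\epsi\to 0$, in the generic case the $\vu_i$'s become pairwise orthogonal, so $\vu_1,\dots,\vu_d$ form an orthonormal basis of $\bR^d$ and each $V_j$ concentrates on a line $L_j$ through a point $\vp_j$ in direction $\vu_j$. Expanding the double-normal inequality $\ipr{\vy'-\vy_2}{\vx-\vy_2}\geq 0$ (for $\vy_2,\vy'\in V_j$ and $\vx\in V_k$ with $k\neq j$) in this orthonormal frame reduces it to $(s'-s)(\alpha_{jk}-s)\geq 0$, where $s,s'$ parametrise $\vy_2,\vy'$ along $L_j$ and $\alpha_{jk}=\ipr{\vu_j}{\vp_k-\vp_j}$. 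Any $\vy_2$ interior to $V_j$ on $L_j$ would realise both signs of $s'-s$, forcing $s=\alpha_{jk}$; thus $V_j$ contains at most three distinct points, contradicting $|V_j|=N$.

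The main obstacle is the degenerate case where some $V_j$ fails to spread out into a genuine line segment (so the corresponding $\vu_j$ is not properly defined) or where the $\vu_i$'s fail to span $\bR^d$. In such cases, the argument must fall back on Lemmas~\ref{dn2} and~\ref{dn3}, which upgrade the single-direction orthogonality of Lemma~\ref{dn1} to orthogonality against an entire plane (by using a second external point, or by exploiting a very short internal displacement together with a long external one). Suitably chosen planes drawn from across the $d-1$ other classes, combined with the microscopic-displacement version in Lemma~\ref{dn3}, should exhaust the remaining spanning direction and close the proof.
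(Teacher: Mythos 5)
Your overall strategy is the paper's: Ramsey/pigeonhole to extract a nearly collinear triple from each class, Lemma~\ref{dn1} to make the internal direction of each class almost orthogonal to everything outside it, Lemmas~\ref{dn2} and~\ref{dn3} to compare the internal directions of two different classes, and a compactness limit at the end. But the case you set aside as ``degenerate'' is in fact the whole difficulty, and you do not resolve it. After normalising $\diam(V)=1$ there is no reason whatsoever for any class to contain a near-collinear triple of length bounded below by a constant, so your ``generic case'' (in which $\abs{\ipr{\vu_i}{\vu_j}}\leq O(\epsi)/\norm{\vy_1^{(j)}-\vy_2^{(j)}}$ is useful, and in which the $\vu_i$ become orthogonal in the limit) may simply never occur: the classes can live at wildly different scales, and the slab of thickness $O(\epsi)$ perpendicular to $\vu_i$ says nothing about a class $V_j$ of diameter $o(\epsi)$. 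Saying that Lemmas~\ref{dn2} and~\ref{dn3} ``should exhaust the remaining spanning direction'' is not an argument; one has to say which of the two lemmas is applied to which pair of classes and why its hypothesis holds. The paper's missing ingredient is a pruning step (Algorithm~\ref{alg1}): relabel the classes in decreasing order of diameter and repeatedly shrink the later classes so that $\norm{\va_{i+1}-\vc_{i+1}}\leq\epsi\norm{\va_i-\vc_i}$ while keeping $\norm{\va_i-\vb_i}\geq\frac12\norm{\va_i-\vc_i}$. With this scale separation in hand, for each pair $i<j$ one runs a dichotomy on the angle $\myangle\va_i\vb_j\vb_i$: if it is at least $\pi/6$, Lemma~\ref{dn2} gives $\abs{\ipr{\vu_{i,i}}{\vu_{j,j}}}<4\epsi$ directly; if it is smaller, Lemma~\ref{dn3} applies \emph{because} the pruning guarantees $\norm{\va_j-\vb_j}/\norm{\va_i-\vb_i}\leq O(\epsi)$. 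Without that ordering and geometric shrinking, neither lemma's hypothesis is available, and your argument does not close.

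Two smaller points. First, your final contradiction in the generic case (parametrising $V_j$ along a limit line and deducing $(s'-s)(\alpha_{jk}-s)\geq 0$, hence at most three points) is fragile: it needs the reduction to be exact, whereas everything you have is approximate at a scale that may vanish relative to the errors. The paper's conclusion is cleaner and you already have its ingredients: in the limit the $k$ internal directions are pairwise orthogonal \emph{and} each is orthogonal to the span of the differences $\vp_i-\vp_j$ of the limit points, which is nontrivial because the base points are normalised to have diameter $1$; that yields $k+1$ mutually orthogonal directions and $k\leq d-1$ at once. Second, your pigeonhole sketch for producing a triple with $\myangle\vy_1\vy_2\vy_3>\pi-O(\epsi)$ is off as stated: two segments sharing the endpoint $\vy_2$ whose directions lie in the same spherical cap make a \emph{small} angle at $\vy_2$, not one near $\pi$. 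You need all lines spanned by pairs of a large subset to be pairwise nearly parallel (this is what \cite{Pach} provides), which induces a betweenness order, and then any point between two others sees them at angle greater than $\pi-\epsi$.
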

This theorem is a straightforward consequence of the following technical result.
\begin{proposition}\label{prop:vectors}
There exist a family of $k=k(d)$ not necessarily distinct points $\set{\vp_1,\dots,\vp_k}$ and a family of $k^2$ not necessarily distinct unit vectors $\setbuilder{\vu_{i,j}}{1\leq i,j\leq k}$, all in $\bR^d$, such that the following holds:
\begin{align}
&\text{$\set{\vp_1,\vp_2,\dots,\vp_k}$ has at least two distinct points and no obtuse angles.}\label{p1}\\
&\text{$\set{\vu_{1,1},\vu_{2,2},\dots,\vu_{k,k}}$ is an orthogonal set.}\label{p2}\\
&\text{If $i\neq j$, then $\vu_{i,j}=-\vu_{j,i}$.}\label{p3}\\
&\text{If $\vp_i\neq\vp_j$, then $\vu_{i,j}=\norm{\vp_j-\vp_i}^{-1}(\vp_j-\vp_i)$.}\label{p6}\\
&\text{For any distinct $i,j$, $\vu_{i,i}$ is orthogonal to $\vu_{i,j}$.}\label{p5}\\
&\text{Each $\vu_{i,i}$ is orthogonal to the subspace $\lin\setbuilder{\vp_j-\vp_1}{j=2,\dots,k}$.}\label{p4}\\
&\text{If $\vp_i=\vp_{i'}\neq \vp_j$, then $\vu_{i,i'}$ is orthogonal to $\vu_{i,j}=\vu_{i',j}$.}\label{p7}
\end{align}
\end{proposition}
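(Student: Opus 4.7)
\medskip
\noindent\emph{Proof plan.}
The plan is a compactness-plus-Ramsey limit argument: let $N\to\infty$ in the definition of $k=k(d)$ and use the geometric lemmas of Section~\ref{sect:3.5} to force the limiting configuration to satisfy \eqref{p1}--\eqref{p7}.

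By the definition of $k=k(d)$, for every $N$ there is a finite set $V^{(N)}\subset\bR^d$ whose double-normal graph contains $K_k(N)$ with vertex classes $V_1^{(N)},\dots,V_k^{(N)}$, each of size $N$. I would first normalize so that $\diam V^{(N)}=1$. Then, applying bipartite Ramsey's theorem to each pair $(V_i^{(N)},V_j^{(N)})$ with a fine covering of the sphere $S^{d-1}$ by spherical caps, and Ramsey inside each class, I would find subsets $\tilde V_i^{(N)}\subset V_i^{(N)}$ of size still tending to $\infty$ with $N$, on which every cross-class direction $(\vq-\vp)/\norm{\vq-\vp}$ with $\vp\in\tilde V_i^{(N)}$, $\vq\in\tilde V_j^{(N)}$ lies in a cap of vanishing radius around some $\vu_{i,j}^{(N)}\in S^{d-1}$, and every within-class direction in $\tilde V_i^{(N)}$ lies near some $\pm\vu_{i,i}^{(N)}$ after fixing an orientation. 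A diagonal Bolzano--Weierstrass extraction then makes the centroids of $\tilde V_i^{(N)}$ converge to points $\vp_i$ and the vectors $\vu_{i,j}^{(N)}$ converge to unit vectors $\vu_{i,j}$; setting $\vu_{j,i}:=-\vu_{i,j}$ for $i\neq j$ gives \eqref{p3}, and \eqref{p6} follows immediately from the construction.

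Because $\diam V^{(N)}=1$, at least two of the $\vp_i$ must be distinct. The non-obtuse-angle part of \eqref{p1} comes from the elementary fact that if $\set{\vp,\vq}$ is a double-normal pair of $V$ and $\vr\in V$ then $\myangle\vp\vq\vr\leq\pi/2$; applying this to triples chosen from three different classes and passing to the limit covers every angle in the set of distinct points among $\set{\vp_1,\dots,\vp_k}$. To obtain \eqref{p5} in the case $\vp_i\neq\vp_j$, I would pick $\vy_1,\vy_2,\vy_3\in\tilde V_i^{(N)}$ ordered along $\vu_{i,i}^{(N)}$; because $\diam\tilde V_i^{(N)}=o(\norm{\vp_i-\vp_j})$, Ramsey-thinning forces $\myangle\vy_1\vy_2\vy_3\to\pi$, so Lemma~\ref{dn1} applied to each double-normal pair $\vx\vy_2$ with $\vx\in\tilde V_j^{(N)}$ gives $\abs{\ipr{\vu_{i,i}^{(N)}}{(\vx-\vy_2)/\norm{\vx-\vy_2}}}\to 0$, which in the limit becomes $\vu_{i,i}\perp\vu_{i,j}$. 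Running this over all $j$ with $\vp_j\neq\vp_i$ yields \eqref{p4}. The mutual orthogonality \eqref{p2} of the $\vu_{i,i}$'s and the remaining claims \eqref{p5}, \eqref{p7} at coincident limit points are obtained by the same strategy via Lemma~\ref{dn2}, with $\vx_1$ and $\vx_2$ chosen from two appropriate other classes so that the plane $\vx_1\vx_2\vy_2$ contains, in the limit, the direction one wishes $\vu_{i,i}$ (or $\vu_{i,i'}$) to be orthogonal to.

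The main obstacle is the degeneracy produced by classes converging to the same limit point. When $\vp_i=\vp_j$ the angle $\myangle\vx_1\vy_2\vx_2$ needed in Lemma~\ref{dn2} collapses to zero, and one must switch to Lemma~\ref{dn3}, whose bound depends on the ratio $\norm{\vy_1-\vy_2}/\norm{\vx_1-\vx_2}$. To keep this ratio bounded I would partition the indices $\set{1,\dots,k}$ by their limit point, rescale within each equivalence class by its internal diameter, and reapply the Ramsey and compactness steps at the finer scale; one must then verify that the identifications $\vu_{i,j}=\vu_{i',j}$ when $\vp_i=\vp_{i'}$, required in \eqref{p7}, survive each such rescaling. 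Coordinating these nested scales with the simultaneous direction-pigeonholing at each level is the technically delicate part of the argument.
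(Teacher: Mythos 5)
Your overall strategy -- take $N\to\infty$, pigeonhole directions via a geometric Ramsey argument, apply Lemmas~\ref{dn1}--\ref{dn3}, and pass to a limit -- is the same as the paper's, and your derivations of \eqref{p1}, \eqref{p3}, \eqref{p6}, \eqref{p5} and \eqref{p4} are essentially right (modulo one small slip: normalising $\diam V^{(N)}=1$ does not force two of the limit points $\vp_i$ to be distinct, since the classes of the $K_k(N)$ could all cluster near one point of $V^{(N)}$; you must normalise the diameter of the set of class representatives instead). The genuine gap is in \eqref{p2}, which is the property that actually drives Theorem~\ref{thm:upperbound}. To prove $\vu_{i,i}\perp\vu_{j,j}$ via Lemma~\ref{dn2} or \ref{dn3} you need the plane $\vx_1\vx_2\vy_2$ to contain, in the limit, the \emph{internal} direction $\vu_{i,i}$ of class $i$; this forces you to take $\vx_1$ and $\vx_2$ to be two points of the \emph{same} class $V_i$ separated by a distance comparable to $\diam(V_i)$ (the paper takes $\vx_1=\va_i$, $\vx_2=\vb_i$), not points ``from two appropriate other classes'' as you propose -- a plane spanned by directions towards two other classes has no reason to contain $\vu_{i,i}$.

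Once $\vx_1,\vx_2\in V_i$, the angle $\myangle\vx_1\vy_2\vx_2$ is small precisely when $\diam(V_i)$ is much smaller than the distance from $V_i$ to $V_j$, and Lemma~\ref{dn3} then gives a bound proportional to $\norm{\va_j-\vb_j}/\norm{\va_i-\vb_i}\approx\diam(V_j)/\diam(V_i)$. You need this ratio to tend to $0$, not merely to stay bounded as you state; a bounded ratio gives a bounded inner product, not orthogonality. Your proposed fix -- partition the indices by limit point and rescale each cluster -- is circular as described (the clusters are only known after the limit is taken, while the Ramsey thinning must be performed at each finite $N$), and you yourself flag the coordination of the nested scales as unresolved. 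The paper resolves exactly this point with a pruning algorithm run \emph{before} any limit: order the classes by diameter and repeatedly extract, from each smaller class, a large subset whose diameter is at most $\epsi$ times the previous one, so that $\diam(V_{i+1})\leq\epsi\,\diam(V_i)$ for all $i$. This guarantees, for every pair $i<j$, that either the angle $\myangle\va_i\vb_j\vb_i$ is at least $\pi/6$ (so Lemma~\ref{dn2} gives a bound of $4\epsi$) or the ratio in Lemma~\ref{dn3} is $O(\epsi)$, with no reference to limit points at all. Without this (or an equivalent device) your argument does not establish \eqref{p2}, and properties \eqref{p7} at coincident limit points inherit the same difficulty.
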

\begin{proof}
The proof consists of three steps.

\smallskip
\textbf{Step 1.} We will use a geometric Ramsey-type result from \cite{Pach} and the pigeon-hole principle to show that for any $\epsi>0$ there exists $N$ such that for any $K_k(N)$ with classes $V_1,\dots,V_k$ contained in some double-normal graph in $\bR^d$, there exist points $\va_i,\vb_i,\vc_i\in V_i$ ($i=1,\dots,k$) such that 
\begin{align}
&\myangle\va_i\vb_i\vc_i > \pi-\epsi,\quad i=1,\dots,k,\label{q2}\\
&\norm{\va_{i+1}-\vc_{i+1}}\leq\epsi\norm{\va_i-\vc_i},\quad i=1,\dots,k-1,\label{q3}\\
&\norm{\va_i-\vb_i}\geq\frac{1}{2}\norm{\va_i-\vc_i},\quad i=1,\dots,k.\label{q4}
\end{align}

\textbf{Step 2.} We use the results from Section~\ref{sect:3.5} to show that if we set $\vu_{i,i}=\norm{\va_i-\vb_i}^{-1}(\va_i-\vb_i)$ and $\vu_{i,j}=\norm{\vb_j-\vb_i}^{-1}(\vb_j-\vb_i)$, then
\begin{align}
&\abs{\ipr{\vu_{i,i}}{\vu_{i,j}}}<\epsi,\quad i,j=1,\dots,k,\; i\neq j.\label{q6}\\
&\abs{\ipr{\vu_{i,i}}{\vu_{j,j}}}<4\epsi,\quad i,j=1,\dots,k,\; i\neq j\label{q5}
\end{align}

\textbf{Step 3.} The proposition will follow by setting $\epsi=1/n$ and taking subsequences of the sequences $\va_i^{(n)}, \vb_i^{(n)}, \vc_i^{(n)}$, $i=1,\dots,k$, such that $\vb_i^{(n)}$ converges to $\vp_i$, and each $\vu_{i,j}^{(n)}$ converges, as $n\to\infty$.
The details follow.

\smallskip
Let $\epsi>0$ be given. Write $t=\lceil (\epsi\cos\epsi)^{-1}\rceil$.
In \textbf{Step~1}, applying \cite[Theorem~4]{Pach} we first choose a sufficiently large $N$ depending only on $\epsi$ and $d$ such that each class $V_i$ of any $K_k(N)$ contained in a double-normal graph in $\bR^d$ has a subset $V_i'$ of size $2t^{k-1}+1$ such that
for any $\va,\vb, \vc, \vd$ from the same $V_i'$ with $\va\neq\vb$ and $\vc\neq\vd$, the angle between the lines $\va\vb$ and $\vc\vd$ is less than $\epsi$.
We now replace the original $V_i$ by $V_i'$.
If we assume $\epsi<\pi/3$, we obtain a natural linear ordering (more precisely, a betweenness relation) on the points of each $V_i$, by defining for each $\vx,\vy,\vz\in V_i$ that $\vy$ is \define{between} $\vx$ and $\vz$ if $\myangle\vx\vy\vz>\pi-\epsi$.
Then $\norm{\vy-\vx}<\norm{\vz-\vx}$ whenever $\vy$ is between $\vx$ and $\vz$.

\begin{algorithm}[t]
\For{$i=1$ \KwTo $k$}{
\DontPrintSemicolon (\emph{Note that here $\card{V_j}= 2t^{k-i}+1$ for all $j\geq i$})\;
relabel $V_i,\dots,V_k$ such that $\diam(V_i)=\max\setbuilder{\diam(V_j)}{j>i}$\;
\For{$j=i+1$ \KwTo $k$}{
 \DontPrintSemicolon find $V_j'\subseteq V_j$ such that $\card{V_j'}= 2t^{k-i-1}+1$\;
 \PrintSemicolon\quad and $\diam(V_j')\leq \epsi\diam(V_j)$\;
replace $V_j$ by $V_j'$\;
}
}
\caption{Pruning the sets $V_i$}\label{alg1}
\end{algorithm}
Next we run Algorithm~\ref{alg1} on $V_1,\dots,V_k$.
Note that at the start of the outer \textbf{for} loop, $\card{V_j}= 2t^{k-i}+1$ for all $j=i,\dots,k$.
That we can find a $V_j'$ as required inside the inner \textbf{for} loop, is seen as follows.
Write $V_j=\set{\vp_1,\dots,\vp_{2t^{k-i}+1}}$ with the points in their natural order (where $\vp_j$ is between $\vp_i$ and $\vp_k$ if $\myangle\vp_i\vp_j\vp_k>\pi-\epsi$).
Let $\vp_i'$ be the orthogonal projection of $\vp_i$ onto the line $\ell$ through $\vp_1$ and $\vp_{2t^{k-i}+1}$.
Since $\epsi<\pi/2$, the points $\vp_i'$ are in order on $\ell$, and 
\begin{align*}
\norm{\vp_1-\vp_{2t^{k-i}+1}} &= \norm{\vp_1'-\vp_{2t^{k-i}+1}'}\\
&= \sum_{s=1}^t\norm{\vp_{2t^{k-i-1}(s-1)+1}'-\vp_{2t^{k-i-1}s+1}'}\\
& > \cos\epsi\sum_{s=1}^t\norm{\vp_{2t^{k-i-1}(s-1)+1}-\vp_{2t^{k-i-1}s+1}},
\end{align*}
where the last inequality holds, because the angle between $\ell$ and the line through any two $\vp_i$ is less than $\epsi$.
Thus,
\begin{align*}
&\mathrel{\phantom{<}}\frac{1}{t}\sum_{s=1}^t\norm{\vp_{2t^{k-i-1}(s-1)+1}-\vp_{2t^{k-i-1}s+1}}\\
&< \frac{1}{t\cos\epsi}\norm{\vp_1-\vp_{2t^{k-i}+1}}< \epsi\norm{\vp_1-\vp_{2t^{k-i}+1}}.
\end{align*}
It follows that for some $s\in\set{1,\dots,t}$, \[\norm{\vp_{2t^{k-i-1}(s-1)+1}-\vp_{2t^{k-i-1}s+1}} < \epsi\norm{\vp_1-\vp_{2t^{k-i}+1}}.\]
Let $V_j'=\set{\vp_{2t^{k-i-1}(s-1)+1},\dots,\vp_{2t^{k-i-1}s+1}}$.
Then $\card{V_j'}=2t^{k-i-1}+1$ and \[\diam(V_j') < \epsi\norm{\vp_1-\vp_{2t^{k-i}+1}} = \epsi\diam(V_j).\]
When the algorithm is done, we have sets $V_1,\dots,V_k$ such that $\diam(V_{i+1})\geq\epsi\diam(V_i)$ for each $i=1,\dots,k-1$, and $\card{V_i}=2t^{k-i}+1\geq 3$ for each $i=1,\dots,k$.
Let $\va_i\vc_i$ be a diameter of $V_i$ and choose any $\vb_i\in V_i\setminus\set{\va_i,\vc_i}$.
Then \eqref{q2} and \eqref{q3} hold.
To ensure \eqref{q4}, exchange $\va_i$ and $\vc_i$ if necessary such that $\norm{\va_i-\vb_i}\geq\norm{\vc_i-\vb_i}$.
Then \eqref{q4} follows from the triangle inequality.

\smallskip
In \textbf{Step~2} we show \eqref{q6} and \eqref{q5}.
Let $1\leq i,j\leq k$, $i\neq j$.
Without loss of generality, $i<j$.
Then \eqref{q6} follows upon applying Lemma~\ref{dn1} with $\vx=\vb_i$, $\vy_1=\va_j$, $\vy_2=\vb_j$, $\vy_3=\vc_j$.

If $\myangle\va_i\vb_j\vb_i\geq\pi/6$, then by Lemma~\ref{dn2} with $\vx_1=\va_i$, $\vx_2=\vb_i$, $\vy_1=\va_j$, $\vy_2=\vb_j$, $\vy_3=\vc_j$,
\[ \abs{\ipr{\vu_{i,i}}{\vu_{j,j}}} < \frac{2\epsi}{\sin\myangle\va_i\vb_j\vb_i}\leq\frac{2\epsi}{\sin\pi/6}=4\epsi.\]
If $\myangle\va_i\vb_j\vb_i < \pi/6$, then by Lemma~\ref{dn3} with $\vx_1=\va_i$, $\vx_2=\vb_i$, $\vy_1=\va_j$, $\vy_2=\vb_j$,
\begin{align*}
\abs{\ipr{\vu_{i,i}}{\vu_{j,j}}} &< \frac{\sqrt{2}}{\cos^2\myangle\va_i\vb_j\vb_i}\frac{\norm{\va_j-\vb_j}}{\norm{\va_i-\vb_i}}\\
&< \frac{\sqrt{2}}{\cos^2(\pi/6)}\frac{\norm{\va_j-\vc_j}}{\frac{1}{2}\norm{\va_i-\vc_i}} < (8\sqrt{2}/3)\epsi < 4\epsi,
\end{align*}
which shows \eqref{q5}.

\smallskip
In \textbf{Step~3}, we let $n\in\bN$ be arbitrary, set $\epsi=1/n$, and choose $\va_i^{(n)}$, $\vb_i^{(n)}$, $\vc_i^{(n)}$, $i=1,\dots,k$, as in the first stage of the proof.
We may assume, after translating and scaling each $\bigcup_{i=1}^k V_i^{(n)}$ if necessary, that $\set{\vb_1^{(n)},\dots,\vb_k^{(n)}}$ has diameter $1$ and is contained in the unit ball.
Thus, we may pass to subsequences to assume that for each $i$, $\vb_i^{(n)}$ converges to $\vp_i$, say,
\[\vu_{i,i}^{(n)}:=\norm{\va_i^{(n)}-\vb_i^{(n)}}^{-1}(\va_i^{(n)}-\vb_i^{(n)})\]
converges to $\vu_{i,i}$, say, and
\[\vu_{i,j}^{(n)}:=\norm{\vb_j^{(n)}-\vb_i^{(n)}}^{-1}(\vb_j^{(n)}-\vb_i^{(n)})\]
converges to $\vu_{i,j}$, say.
Then $\diam\set{\vp_1,\dots,\vp_k}=1$, and since there are no obtuse angles in $\set{\vb_1^{(n)},\dots,\vb_k^{(n)}}$, there will still be no obtuse angles between distinct elements of $\set{\vp_1,\dots,\vp_k}$.
Thus, \eqref{p1} holds.
Also, \eqref{p2} follows from \eqref{q5}, 
\eqref{p3} from the definition of $\vu_{i,j}^{(n)}$,
\eqref{p6} from the definitions of $\vu_{i,j}^{(n)}$ and $\vp_i$, and \eqref{p5} from \eqref{q6}.
Properties \eqref{p6} and \eqref{p5} immediately imply that $\vu_{i,i}$ is orthogonal to $\vp_i-\vp_j$ for all $j\neq i$.
Since the subspace $\lin\setbuilder{\vp_i-\vp_j}{j\neq i}$ is the same for all $i$, we obtain \eqref{p4}.

Finally, suppose $\vp_i=\vp_{i'}\neq\vp_j$.
Since $\myangle\vb_i^{(n)}\vb_j^{(n)}\vb_{i'}^{(n)}\to 0$ as $n\to\infty$ and $\triangle\vb_i\vb_{i'}\vb_j$ is not obtuse, we obtain that $\myangle\vb_i^{(n)}\vb_{i'}^{(n)}\vb_j^{(n)}\to \pi/2$ and $\myangle\vb_{i'}^{(n)}\vb_i^{(n)}\vb_j^{(n)}\to \pi/2$ as $n\to\infty$, giving $\vu_{i,i'}\perp\vu_{i,j}$.
This shows \eqref{p7}.
\end{proof}
\begin{proof}[Proof of Theorem~\ref{thm:upperbound}]
Let $k=k(d)$.
Consider the points $\vp_1,\dots,\vp_k$ and vectors $\vu_{i,j}$, $1\leq i,j\leq k$ given by Proposition~\ref{prop:vectors}.
There exist distinct $i$ and $j$ such that $\vp_i\neq\vp_j$.
By~\eqref{p2}, the $k$ unit vectors $\vu_{1,1},\dots,\vu_{k,k}$ are pairwise orthogonal.
By~\eqref{p4}, they are also orthogonal to $\vp_i-\vp_j$, which is a multiple of $\vu_{i,j}$ by~\eqref{p6}.
Thus, we have found $k+1$ pairwise orthogonal vectors.
That is, $k(d)+1\leq d$.
\end{proof}

\section{Constructions with many strict double-normal pairs}\label{sect:6}
\begin{theorem}\label{thm:construct}
Let $m\geq 2$.
Suppose that there exist $m$ points $\vp_1,\dots,\vp_m\in\bR^d$ and $m$ unit vectors $\vu_1,\dots,\vu_m\in\bR^d$ such that, for all triples of distinct $i,j,k$, the angle $\myangle\vp_i\vp_j\vp_k$ is acute, and 
\begin{equation}\label{cond}
\ipr{\vu_i}{\vp_i-\vp_j}<\ipr{\vu_i}{\vp_k-\vp_j}<\ipr{\vu_i}{\vp_j-\vp_i}.
\end{equation}
Then, for any $N\in\bN$, there exists a strict double-normal graph in $\bR^{d+m}$ containing a complete $m$-partite $K_m(N)$.
In particular, $k'(d+m)\geq m$.
\end{theorem}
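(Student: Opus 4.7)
The plan is to exhibit, for every $N$, an explicit set $V\subset\bR^{d+m}$ of $mN$ points whose strict double-normal graph contains $K_m(N)$, which by the definition of $k'(d+m)$ forces $k'(d+m)\geq m$. I would identify $\bR^d\subset\bR^{d+m}$ canonically, let $\ve_1,\dots,\ve_m$ denote the standard orthonormal basis of the extra $\bR^m$, and write $c_{ij}:=\ipr{\vu_i}{\vp_i-\vp_j}$. The first half of \eqref{cond} gives $c_{ij}<0$, and the second half forces $c_{ij}/c_{ik}\in(1/2,2)$ strictly for distinct $j,k\neq i$. Fix a representative $j_i\neq i$, set $\gamma_i:=-1/(2c_{ij_i})$ so that every $a_{ij}:=\gamma_i c_{ij}$ lies strictly in $(-1,-1/4)$, put $\rho_{ij}:=-(1+a_{ij})/a_{ij}\in(0,3)$, and pick a large constant $q>\max_{i,j}\max(\rho_{ij},1/\rho_{ij})$ together with a small $\delta>0$. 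Define the $i$th class as the parabolic arc
\[V_i:=\bigl\{\vp_i+\sigma_s\ve_i+\gamma_i\sigma_s^2\vu_i:\;s=1,\dots,N\bigr\},\qquad \sigma_s:=\delta\,q^{s-1}.\]

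To verify that every cross-class pair $\{\vx,\vy\}$ with $\vx\in V_i$, $\vy\in V_j$, $i\neq j$, is a strict double-normal of $V:=V_1\cup\dots\cup V_m$, I would check $\ipr{\vx-\vy}{\vx-\vz}>0$ for every $\vz\in V\setminus\{\vx,\vy\}$, separating into three cases by the class $V_k$ of $\vz$. If $k\notin\{i,j\}$, the leading (constant in $\delta$) contribution is $\ipr{\vp_i-\vp_j}{\vp_i-\vp_k}$, strictly positive because every triangle $\vp_i\vp_j\vp_k$ has an acute angle at $\vp_i$ by hypothesis. If $k=j$ and $\vz\neq\vy$, the leading contribution is $\norm{\vp_i-\vp_j}^2>0$. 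The delicate case is $k=i$, $\vz\neq\vx$, where both the constant and linear-in-$\sigma$ contributions cancel and a direct expansion yields
\[\ipr{\vx-\vy}{\vx-\vz}=(\sigma_s-\sigma_r)\bigl[(1+a_{ij})\sigma_s+a_{ij}\sigma_r\bigr]+R,\]
with $s,r$ indexing $\vx,\vz$ and remainder $R$ of order $\gamma_i^2\sigma^4$. The leading bracket, viewed as a quadratic in $\sigma_r$, vanishes at $\sigma_r=\sigma_s$ and $\sigma_r=\rho_{ij}\sigma_s$ and is strictly positive outside the interval between them; the geometric spacing $\sigma_r/\sigma_s=q^{r-s}$, combined with the choice of $q$, guarantees that this ratio lies outside $[\min(1,\rho_{ij}),\max(1,\rho_{ij})]$ for every $r\neq s$, so for $\delta$ small the bracket dominates $R$ and the inner product is strictly positive. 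The symmetric inequality $\ipr{\vy-\vx}{\vy-\vz}>0$ is verified identically with the roles of $i$ and $j$ exchanged.

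The hard part is precisely the third case. Once the constant and $O(\sigma)$ terms have cancelled, strictness must be extracted from the second-order curvature term $\gamma_i\sigma_s^2\vu_i$, and a single scalar $\gamma_i$ cannot make $\gamma_i c_{ij}=-1/2$ (which would reduce the bracket to the perfect square $\tfrac12(\sigma_s-\sigma_r)^2$) hold simultaneously for all $j\neq i$. The essential quantitative input is condition \eqref{cond}: it confines the $c_{ij}$ to the open ratio-interval $(1/2,2)$, so every $a_{ij}$ lies strictly inside $(-1,-1/4)$ and $\rho_{ij}$ is bounded uniformly away from the troublesome value $1$. A sufficiently sparse geometric sequence therefore avoids the bad ratio-interval uniformly in $i,j$, the construction succeeds for every $N$, and we conclude $k'(d+m)\geq m$.
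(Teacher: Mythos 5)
Your construction is correct, and it realises the same underlying idea as the paper's proof --- each class $V_i$ is a tiny arc through $\vp_i$ lying in the $2$-plane $\vp_i+\lin\set{\vu_i,\ve_i}$, curving in the direction $\vu_i$ with curvature calibrated by \eqref{cond} --- but the execution is genuinely different. The paper places $V_i$ on a circle of radius $r_i$ centred at $\vp_i+r_i\vu_i$, with $r_i$ chosen in the interval $(\tfrac12\max_j\ipr{\vu_i}{\vp_j-\vp_i},\,\min_j\ipr{\vu_i}{\vp_j-\vp_i})$ (after an $\epsi$-perturbation); your scalar $\gamma_i$ plays the role of $1/(2r_i)$, and your constraint $a_{ij}=\gamma_i c_{ij}\in(-1,-1/4)$ is exactly the same use of \eqref{cond}, so the parabola is in effect the osculating parabola of the paper's circle. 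The real divergence is in the within-class verification: the paper chooses the points on the arc recursively (each new point beyond the foot of a suitable perpendicular) and argues synthetically with inscribed-angle/Thales facts that $\myangle\vz\vx_t\vx_s$ is acute for all $\vz$ on a fixed segment $\va_i\vb_i$; you instead take an explicit geometric parameter sequence $\sigma_s=\delta q^{s-1}$ and reduce the acuteness to the positivity of the quadratic $(\sigma_s-\sigma_r)\bigl[(1+a_{ij})\sigma_s+a_{ij}\sigma_r\bigr]$ away from its roots $\sigma_r/\sigma_s\in\set{1,\rho_{ij}}$, which the sparseness $q>\max(\rho_{ij},1/\rho_{ij})$ guarantees uniformly. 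Your version buys an explicit, checkable formula for the point set and makes the role of \eqref{cond} (confining $\rho_{ij}$ to a compact subset of $(0,\infty)$, bounded away from nothing worse than the harmless value $1$) completely transparent; the paper's version avoids all remainder estimates at the cost of a non-explicit recursive choice. Two small points to make explicit if you write this up: the cross-class cases need the $\vp_i$ to be pairwise distinct (which \eqref{cond} forces for $m\geq 3$, since $\ipr{\vu_i}{\vp_j-\vp_i}>0$), and $\delta$ must be chosen after $N$ and $q$ so that the $O(\sigma_{\max}^2)$ remainders are beaten by the finitely many positive constants $\ipr{\vp_i-\vp_j}{\vp_i-\vp_k}$, $\norm{\vp_i-\vp_j}^2$ and the quadratic main term --- both of which you clearly have in hand.
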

Geometrically, \eqref{cond} means that if we project the points $\vp_1,\dots,\vp_m$ orthogonally onto the line through $\vp_i$ parallel to $\vu_i$, then the projected points are on the ray from $\vp_i$ in the direction of $\vu_i$, and the furthest one is at less than twice the distance from $\vp_i$ than the closest one (other than~$\vp_i$).
\begin{proof}
Identify $\bR^d$ with the first $d$ coordinates of $\bR^{d+m}$, and let $\vv_1,\dots,\vv_m\in\bR^{d+m}$ be pairwise orthogonal unit vectors that are also orthogonal to $\bR^d$.
We will construct countably infinite sets $V_1,\dots,V_m\subset\bR^{d+m}$, with each $V_i$ on a circular arc through $\vp_i$ in the plane $\Pi_i:=\vp_i+\lin\set{\vu_i,\vv_i}$.
Then we will verify that for any distinct $i,j$ and any $\vx\in V_i$ and $\vy\in V_j$, $\vx\vy$ is a strict double-normal pair of $\bigcup_i V_i$.

We will use a small $\epsi>0$ that will depend only on the given points $\vp_1,\dots,\vp_m$ and vectors $\vu_1,\dots,\vu_m$.
As the proof progresses, we will put finitely many constraints on $\epsi$, all depending only on the points $\vp_i$ and vectors $\vu_i$.

Let $\alpha_i=\min_{j\neq i}\ipr{\vu_i}{\vp_j}$ and $\beta_i=\max_{j}\ipr{\vu_i}{\vp_j}$.
By condition~\eqref{cond}, $\ipr{\vu_i}{\vp_i}-\alpha_i<\beta_i-\alpha_i<\alpha_i-\ipr{\vu_i}{\vp_i}$, hence $\ipr{\vu_i}{\vp_i}<\frac12(\beta_i+\ipr{\vu_i}{\vp_i})<\alpha_i$.
We choose $\epsi>0$ small enough so that $\frac12(\beta_i+\epsi+\ipr{\vu_i}{\vp_i})<\alpha_i-\epsi$ for all $i$.
Choose any $r_i\in(\frac12(\beta_i+\epsi+\ipr{\vu_i}{\vp_i}),\alpha_i-\epsi)$, and set $\vc_i=\vp_i+r_i\vu_i$, $\va_i=\vp_i+(\alpha_i-\epsi)\vu_i$, $\vb_i=\vp_i+(\beta_i+\epsi)\vu_i$, $\vq_i=\vp_i+2r_i\vu_i$ (Fig.~\ref{fig2}).
\begin{figure}
\centering
\definecolor{uuuuuu}{rgb}{0.27,0.27,0.27}
\begin{tikzpicture}[scale=1.1,line cap=round,line join=round,>=triangle 45,x=1.0cm,y=1.0cm]
\draw(2.58,1.68) circle (2.7cm);
\draw (4.28,1.67)-- (2.03,4.32);
\draw (-0.37,2.29)-- (2.8,4.97);
\draw [line width=0.4pt] (2.03,4.32)-- (1.85,4.17);
\draw [line width=0.4pt] (1.85,4.17)-- (2,3.99);
\draw [line width=0.4pt] (2,3.99)-- (2.18,4.15);
\draw [line width=0.4pt] (2.18,4.15)-- (2.03,4.32);
\draw (-0.12,1.7)-- (5.28,1.66);
\begin{scope}[shift={(-2,3)}]
\draw[color=black, -latex] (0,0) -- (0.6,0) node[right] {$\vu_i$};
\draw[color=black, -latex] (0,0) -- (0,0.6) node[above] {$\vv_i$};
\draw[color=black] (0.15,0) -- (0.15,0.15) -- (0,0.15);
\end{scope}
\fill [color=black] (2.58,1.68) circle (1.5pt);
\draw[color=black] (2.6,1.4) node {$\vc_i$};
\fill [color=black] (5.28,1.66) circle (1.5pt);
\draw[color=black] (5.6,1.65) node {$\vq_i$};
\fill [color=black] (-0.12,1.7) circle (1.5pt);
\draw[color=black,left] (-0.1,1.7) node {$\vp_i$};
\fill [color=black] (4.28,1.67) circle (1.5pt);
\draw[color=black] (4.25,1.42) node {$\vb_i$};
\fill [color=black] (3.66,1.67) circle (1.5pt);
\draw[color=black] (3.65,1.42) node {$\vz$};
\fill [color=black] (3.1,1.68) circle (1.5pt);
\draw[color=black] (3.1,1.4) node {$\va_i$};
\fill [color=black] (2.03,4.32) circle (1.5pt);
\draw[color=black] (1.9,4.55) node {$\vx_1$};
\fill [color=uuuuuu] (0.06,2.66) circle (1.5pt);
\draw[color=uuuuuu] (-0.15,2.7) node {$\vy$};
\fill [color=black] (-0.1,2.04) circle (1.5pt);
\draw[color=black] (0.2,2.1) node {$\vx_2$};
\draw (4.5,4) node {$C_i$};
\end{tikzpicture}
\caption{Constructing $V_i=\setbuilder{\vx_t}{t\in\bN}$}\label{fig2}
\end{figure}
Denote the circle with centre $\vc_i$ and radius $r_i$ in the plane $\Pi_i$ by $C_i$.
Then $\vp_i\vq_i$ is a diameter of $C_i$ parallel to $\vu_i$, and $\va_i$ and $\vb_i$ are strictly between $\vc_i$ and $\vq_i$.
Choose any $\vx_1\in C_i\setminus\set{\vp_i}$ such that $\myangle \vx_1\vc_i\vp_i$ is acute.
We will now recursively choose $\vx_2,\vx_3,\dots$ on the minor arc $\gamma_i$ of $C_i$ between $\vx_1$ and $\vp_i$ such that for any $\vz$ on the segment $\va_i\vb_i$, the angle $\myangle\vz\vx_t\vx_s$ is acute for all distinct $s,t\in\bN$.
Assume that for some $t\in\bN$ we have already chosen $\vx_1,\dots,\vx_t\in\gamma_i$ with $\vx_{s+1}$ between $\vx_s$ and $\vp_i$ for each $s=1,\dots,t-1$, and such that $\myangle\vz\vx_j\vx_k$ is acute for all $1\leq j,k\leq t$, $j\neq k$, and for all $\vz$ on the segment $\va_i\vb_i$.
Since $\vp_i\vx_t\vq_i$ is a right angle, $\myangle\vp_i\vx_t\vb_i$ is acute, and the line in $\Pi_i$ through $\vx_t$ and perpendicular to $\vb_i\vx_t$ intersects $C_i$ in a point $\vy\in\gamma_i$ between $\vx_t$ and $\vp_i$.
Let $\vx_{t+1}$ be any point on $\gamma_i$ between $\vy$ and $\vp_i$.
Now consider any $\vz$ on the segment $\va_i\vb_i$.
We have to show that $\myangle \vz\vx_{t+1}\vx_s$ and $\vz\vx_s\vx_{t+1}$ are acute for all $s=1,\dots,t$.
This can be simply seen as follows:
\[ \myangle\vz\vx_{t+1}\vx_s \leq \myangle \vz\vx_{t+1}\vx_t \leq \myangle \vc_i\vx_{t+1}\vx_t < \pi/2\]
and
\[ \myangle \vz\vx_{s}\vx_{t+1} \leq\myangle\vz\vx_t\vx_{t+1}\leq\myangle\vb_i\vx_t\vx_{t+1}<\myangle\vb_i\vx_t\vy=\pi/2.\]
Finally, let $V_i=\setbuilder{\vx_t}{t\in\bN}$.
Then $\diam V_i=\norm{\vp_i-\vx_1}$, which can be made arbitrarily small by choosing $\vx_1$ close enough to $\vp_i$.
We can assume that all $\diam(V_i)<\epsi$. 
This finishes the construction.

Let $1\leq i < j \leq m$, $\vx\in V_i$ and $\vy\in V_j$.
We have to show that all $\vz\in\bigcup_i V_i\setminus\set{\vx,\vy}$ are in the open slab bounded by the hyperplanes through $\vx$ and $\vy$ orthogonal to $\vx\vy$.
First consider the case where $\vz\in V_k$, $k\neq i,j$.
Since $\myangle\vp_i\vp_j\vp_k$ and $\myangle\vp_j\vp_i\vp_k$ are acute, $\ipr{\vp_i-\vp_j}{\vp_k-\vp_j}>0$ and $\ipr{\vp_j-\vp_i}{\vp_k-\vp_i}>0$.
Noting that $\norm{\vx-\vp_i},\norm{\vy-\vp_j},\norm{\vz-\vp_k}<\epsi$, it follows that
$\ipr{\vx-\vy}{\vz-\vy}>0$ and $\ipr{\vy-\vx}{\vz-\vx}>0$
if $\epsi$ is sufficiently small, depending only on the given points.
That is, $\vz$ is in the open slab determined by $\vx\vy$.

Next consider the case where $\vz\in V_i\cup V_j$.
Without loss of generality, $\vz\in V_i$.
Then \[\ipr{\vx-\vy}{\vz-\vy}=\ipr{\vx-\vy}{\vz-\vx}+\norm{\vx-\vy}^2 \geq -\epsi\norm{\vx-\vy}+\norm{\vx-\vy}^2 > 0,\] as long as $\epsi<\norm{\vx-\vy}$.
It remains to verify that $\ipr{\vy-\vx}{\vz-\vx}>0$.
Denote the orthogonal projection of a point $\vp\in\bR^{d+m}$ onto the plane $\Pi_i$ by $\vp'$.
Since $V_j\subset\Pi_j\subseteq\bR^d+\lin\set{\vv_j}$, it follows that $\vp_j',\vy'\in\vp_i+\lin\set{\vu_i}$.
In particular, $\vp_j'$ is also the orthogonal projection of $\vp_j$ onto the line $\vp_i+\lin\set{\vu_i}$.
By hypothesis, $\vp_j'=\vp_i+\lambda\vu_i$ for some $\lambda\in[\alpha_i,\beta_i]$.
Since $\norm{\vp_j'-\vy'}\leq\norm{\vp_j-\vy}<\epsi$, it follows that $\vy'=\vp_i+\mu\vu_i$ where $\mu\in[\alpha_i-\epsi,\beta_i+\epsi]$, that is, $\vy'$ is on the segment $\va_i\vb_i$.
By construction, the angle $\myangle\vy'\vx\vz$ is acute, hence $\ipr{\vy-\vx}{\vz-\vx}=\ipr{\vy'-\vx}{\vz-\vx}>0$.
\end{proof}
\begin{corollary}\label{cor1}
$k'(d)\geq \lceil d/2\rceil$.
\end{corollary}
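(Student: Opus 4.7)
The plan is to reduce directly to Theorem~\ref{thm:construct} by exhibiting, for each $m\geq 2$, a configuration of $m$ points and $m$ unit vectors in $\bR^{m-1}$ satisfying its hypotheses. The natural candidate is the regular simplex: I would take $\vp_1,\dots,\vp_m$ to be the vertices of a regular $(m-1)$-simplex of unit edge length in $\bR^{m-1}$, and for each $i$ set $\vu_i:=(\vc_i-\vp_i)/\norm{\vc_i-\vp_i}$, where $\vc_i$ is the centroid of $\setbuilder{\vp_j}{j\neq i}$.

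Both hypotheses of Theorem~\ref{thm:construct} should fall out of the symmetry of the simplex. Every triangle $\vp_i\vp_j\vp_k$ is equilateral, so each of its angles equals $\pi/3$ and is acute. For the inner-product condition~\eqref{cond}, the stabiliser of $\vp_i$ in the isometry group of the simplex acts transitively on $\setbuilder{\vp_j}{j\neq i}$ and fixes $\vu_i$; hence $\ipr{\vu_i}{\vp_j}$ takes a common value $\alpha_i$ for all $j\neq i$, while $\ipr{\vu_i}{\vp_i}<\alpha_i$ since $\vu_i$ points from $\vp_i$ toward the opposite face. For any triple of distinct indices $i,j,k$ this yields
\[ \ipr{\vu_i}{\vp_i-\vp_j}=\ipr{\vu_i}{\vp_i}-\alpha_i<0=\ipr{\vu_i}{\vp_k-\vp_j}<\alpha_i-\ipr{\vu_i}{\vp_i}=\ipr{\vu_i}{\vp_j-\vp_i}, \]
so~\eqref{cond} holds, with the middle term in fact vanishing.

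Applying Theorem~\ref{thm:construct} with $d=m-1$ then gives $k'(2m-1)\geq m$. For even ambient dimension I would invoke monotonicity of $k'$ in the dimension: any strict double-normal set in $\bR^D$ remains one under the inclusion $\bR^D\hookrightarrow\bR^{D+1}$, because the perpendicular hyperplanes merely gain a trivial extra coordinate direction. Hence $k'(2m)\geq k'(2m-1)\geq m$, and setting $m=\lceil d/2\rceil$ produces the claimed bound in both parities for $d\geq 3$; the cases $d\in\set{1,2}$ reduce to the trivial bound $k'(d)\geq 1$. Since the technical substance was already packed into Theorem~\ref{thm:construct}, I expect no genuine obstacle here: the regular simplex is precisely symmetric enough to force the middle expression in~\eqref{cond} to vanish, automatically sandwiching it between the strictly-signed extremes.
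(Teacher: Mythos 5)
Your proposal is correct and takes essentially the same route as the paper: the paper also takes the vertices of a regular simplex in $\bR^{m-1}$ (inscribed in the unit sphere, so that its choice $\vu_i:=-\vp_i$ coincides with your centroid-directed unit vectors) and applies Theorem~\ref{thm:construct}, concluding $k'(d)\geq k'(2m-1)\geq m$ via the same monotonicity of $k'$ in the dimension. Your explicit symmetry verification of condition~\eqref{cond} merely spells out what the paper asserts in one line.
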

\begin{proof}
Let $m=\lceil d/2\rceil$.
Let $\vp_1,\dots,\vp_m$ be the vertices of a regular simplex in $\bR^{m-1}$ inscribed in the unit sphere.
Then the $\vp_i$ and $\vu_i:=-\vp_i$ satisfy the conditions of Theorem~\ref{thm:construct}.
It follows that $k'(d)\geq k'(2m-1)\geq m$.
\end{proof}
\begin{theorem}\label{thm:exist}
There exist $m=\lfloor\frac14e^{d/20}\rfloor$ distinct points $\vp_1,\dots,\vp_m\in\bR^d$ and unit vectors $\vu_1,\dots,\vu_m\in\bR^d$ such that for all distinct $1\leq i,j,k\leq m$, the angle $\myangle\vp_i\vp_j\vp_k$ is acute, and condition~\eqref{cond} is satisfied.
\end{theorem}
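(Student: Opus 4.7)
The strategy is a random construction in the Boolean cube, in the spirit of the Erd\H{o}s--F\"uredi argument \cite{EF}. Sample $N=2m$ points $\vp_1,\dots,\vp_N$ uniformly and independently from $\{-1,+1\}^d\subset\bR^d$, and set $\vu_i=-\vp_i/\sqrt{d}$, which is a unit vector since $\norm{\vp_i}=\sqrt{d}$. A direct computation yields
\[ \ipr{\vu_i}{\vp_j-\vp_i}=\frac{2h_{ij}}{\sqrt{d}}, \]
where $h_{ij}$ denotes the Hamming distance between $\vp_i$ and $\vp_j$. Substituting into \eqref{cond}, the required condition collapses to $h_{ik}<2h_{ij}$ for every ordered triple of distinct indices $(i,j,k)$. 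The acuteness of $\myangle\vp_i\vp_j\vp_k$ is equivalent to $\ipr{\vp_i-\vp_j}{\vp_k-\vp_j}>0$, and since this inner product equals $4$ times the number of coordinates on which $\vp_j$ differs from both $\vp_i$ and $\vp_k$, acuteness fails only when no such coordinate exists, an event of probability $(3/4)^d$.

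Next I bound the probability of the Hamming bad event $h_{ik}\geq 2h_{ij}$. Conditioning on $\vp_i$, the distances $h_{ij}$ and $h_{ik}$ are independent $\mathrm{Binomial}(d,1/2)$ random variables, and the difference $h_{ik}-2h_{ij}$ is a sum of $d$ i.i.d.\ variables each taking values in $\{-2,-1,0,1\}$ with probability $1/4$ and mean $-1/2$. Optimising the Chernoff bound with the explicit moment generating function, one minimises $\ln\bigl((e^{-2t}+e^{-t}+1+e^{t})/4\bigr)$ over $t\geq 0$ and obtains $\Prob{h_{ik}\geq 2h_{ij}}\leq e^{-cd}$ for an explicit constant $c>1/10$. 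Since $\ln(4/3)>c$, the acuteness event has strictly smaller probability and the Hamming event is the binding constraint.

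Finally I apply the deletion method. Let $X$ count the ordered triples $(i,j,k)$ of distinct indices from the sample that violate either acuteness or the inequality $h_{ik}<2h_{ij}$; then $E[X]\leq 2N^3e^{-cd}$ for $d$ large. Some realisation of the random sample satisfies $X\leq E[X]$; in that realisation, deleting one vertex from each bad triple destroys every bad event while removing at most $X$ vertices, leaving at least $N-2N^3e^{-cd}$ points. Setting $N=2m$ and requiring this to exceed $m$ rearranges to $m\leq e^{cd/2}/4$, and since $c>1/10$, the target $m=\lfloor e^{d/20}/4\rfloor$ lies within this range for $d$ sufficiently large (smaller $d$ are handled by a direct construction or trivially when $m\leq 1$). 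Distinctness of the surviving points holds almost surely (failure probability $\leq N^{2}2^{-d}$), and $\vu_i=-\vp_i/\sqrt{d}$ is automatically a unit vector.

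The main obstacle is the tight Chernoff step: a naive Hoeffding estimate on $h_{ik}-2h_{ij}$ would give only a constant on the order of $1/36$ in the exponent, insufficient to reach the target $1/20$. The deletion step is equally essential, since it allows $m$ to grow like the square root rather than the cube root of the reciprocal bad-event probability.
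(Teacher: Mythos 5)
Your proof is correct and is essentially the paper's own argument, merely transported from characteristic vectors in $\{0,1\}^d$ to the $\{-1,+1\}^d$ cube: the same choice $\vu_i=-\vp_i/\sqrt{d}$ (in centred coordinates), the same reduction of condition~\eqref{cond} to $h_{ik}<2h_{ij}$, the same four-valued increment distribution on $\{1,0,-1,-2\}$ with the same optimised moment generating function giving $(65/72)^d<e^{-d/10}$, and the same deletion step. The only loose end is distinctness: a coincidence $\vp_i=\vp_k$ is not flagged by your bad events (it makes $h_{ik}=0$, which satisfies $h_{ik}<2h_{ij}$ but violates the left-hand inequality of~\eqref{cond}), so you should either count coincident pairs among the configurations to be deleted (as the paper in effect does, since $A_i=A_k$ forces its condition~\eqref{*}) or select a realisation that simultaneously has few bad triples and no coincidences, rather than appealing to an ``almost sure'' statement.
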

The proof of Theorem~\ref{thm:exist} is probabilistic, and is a modification of an argument of Erd\H{o}s and F\"uredi \cite{EF}.
Write $[d]$ for the set $\{1,2,\dots,d\}$ of all integers from $1$ to $d$.
For any $A\subseteq[d]$, let $\Chi(A)\in\set{0,1}^d$ denote its characteristic vector.
The routine proofs of the following three lemmas are omitted.
\begin{lemma}[{\cite[Lemma~2.3]{EF}}]\label{perp}
Let $A$, $B$, and $C$ be distinct subsets of $[d]$.
Then we have $\myangle\Chi(A)\Chi(C)\Chi(B)\leq\pi/2$, and equality holds iff
$A\cap B\subseteq C\subseteq A\cup B$.
\end{lemma}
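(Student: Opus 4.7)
The plan is to translate the angle condition $\myangle\Chi(A)\Chi(C)\Chi(B)\leq\pi/2$ into the non-negativity of the inner product $\ipr{\Chi(A)-\Chi(C)}{\Chi(B)-\Chi(C)}$ and to evaluate that inner product coordinate by coordinate. The distinctness of $A,B,C$ guarantees that both $\Chi(A)-\Chi(C)$ and $\Chi(B)-\Chi(C)$ are nonzero, so the angle at $\Chi(C)$ is well defined and the equivalence with the sign of the inner product is standard.

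For each $i\in[d]$, each factor $\Chi(A)_i-\Chi(C)_i$ and $\Chi(B)_i-\Chi(C)_i$ lies in $\set{-1,0,1}$, with the value determined entirely by whether $i$ lies in $A$ (resp.\ $B$) and whether $i$ lies in $C$. The key observation, which I would verify by a short case check on the eight possibilities for membership of $i$ in $A$, $B$, $C$, is that the two factors can never take opposite nonzero signs: a $(+1,-1)$ contribution would require $i\in A\setminus C$ and $i\in C\setminus B$ simultaneously, which is impossible, and $(-1,+1)$ is ruled out symmetrically. Consequently every coordinate contribution is non-negative, and it equals $+1$ exactly when either both factors equal $+1$ (i.e.\ $i\in(A\cap B)\setminus C$) or both equal $-1$ (i.e.\ $i\in C\setminus(A\cup B)$).

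Summing over coordinates then yields
\[\ipr{\Chi(A)-\Chi(C)}{\Chi(B)-\Chi(C)}=\card{(A\cap B)\setminus C}+\card{C\setminus(A\cup B)}\geq 0,\]
proving the inequality. Since both summands are cardinalities of disjoint sets, the inner product vanishes iff both sets are empty, i.e., iff $A\cap B\subseteq C$ and $C\subseteq A\cup B$, which is exactly the claimed equality condition.

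I do not foresee any real obstacle: the lemma is a short $\set{0,1}$-vector identity, and essentially all the content is in the one-coordinate sign-check that rules out $(\pm 1,\mp 1)$ contributions. The only minor thing to be careful about is that the coordinate-wise analysis must be done with respect to the three relevant ambient-sign cases ($i\in C$ versus $i\notin C$), from which the membership pattern of $i$ in $A$ and $B$ then dictates the value of each factor.
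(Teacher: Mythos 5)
Your proof is correct: the paper omits this proof as ``routine'' (deferring to Erd\H{o}s--F\"uredi, Lemma~2.3), and your coordinate-wise evaluation of $\ipr{\Chi(A)-\Chi(C)}{\Chi(B)-\Chi(C)}=\card{(A\cap B)\setminus C}+\card{C\setminus(A\cup B)}$ is exactly the standard argument being alluded to. The sign check ruling out $(\pm1,\mp1)$ contributions and the use of distinctness to ensure the angle is well defined are both handled properly, so there is nothing to add.
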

\begin{lemma}[\cite{EF}]\label{perpprob}
If $A$, $B$, and $C$ are subsets of $[d]$ chosen independently and uniformly, then we have $\BigProb{A\cap B\subseteq C\subseteq A\cup B} = (3/4)^d$.
\end{lemma}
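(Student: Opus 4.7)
The plan is to exploit independence across coordinates. For each $i\in[d]$, the three indicator random variables $\one_{i\in A}$, $\one_{i\in B}$, $\one_{i\in C}$ are mutually independent and each take the values $0$ and $1$ with probability $1/2$, and moreover the triples $(\one_{i\in A},\one_{i\in B},\one_{i\in C})$ are independent across different $i\in[d]$. The containment $A\cap B\subseteq C\subseteq A\cup B$ is an elementwise condition: it holds if and only if, for every $i\in[d]$, we have both ``$i\in A\cap B\Rightarrow i\in C$'' and ``$i\in C\Rightarrow i\in A\cup B$''. Hence the event splits as an intersection of $d$ independent events, one per coordinate, each having the same probability.

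I will then compute the per-coordinate probability by enumerating the $2^3=8$ equally likely outcomes of $(\one_{i\in A},\one_{i\in B},\one_{i\in C})$. The condition fails in exactly two cases: $(1,1,0)$, where $i\in A\cap B$ but $i\notin C$, and $(0,0,1)$, where $i\in C$ but $i\notin A\cup B$. The other six cases all satisfy the condition. Thus for each $i$ the per-coordinate event has probability $6/8=3/4$, and multiplying across the $d$ independent coordinates yields $(3/4)^d$.

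There is no real obstacle here; the only thing to be careful about is to verify explicitly that $A\cap B\subseteq C\subseteq A\cup B$ decomposes as a conjunction of independent elementwise conditions, and to state the independence of the coordinate-triples cleanly. Both are immediate from the uniform product-measure structure of the set of subsets of $[d]$ identified with $\{0,1\}^d$, so the proof will be a brief two or three sentences.
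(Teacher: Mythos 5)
Your proof is correct, and since the paper explicitly omits this proof as routine (citing Erd\H{o}s--F\"uredi), your coordinatewise argument --- decomposing the event into $d$ independent per-coordinate conditions, each failing only on the outcomes $(1,1,0)$ and $(0,0,1)$ and hence holding with probability $6/8=3/4$ --- is exactly the intended standard argument. Nothing is missing.
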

\begin{lemma}\label{condtriple}
Let $A,B,C\subseteq[d]$ and consider the unit vector
\[ \vu:=(1/\sqrt{d})(\Chi([d])-2\Chi(A)).\]
Then we have $\ipr{\vu}{\Chi(A)}\leq\ipr{\vu}{\Chi(B)}$, with equality if and only if $A=B$.
Also, \[\ipr{\vu}{\Chi(B)-\Chi(C)}\geq\ipr{\vu}{\Chi(C)-\Chi(A)}\] if and only if \[4\card{A\cap C}+\card{B}\geq2\card{A\cap B}+\card{A}+2\card{C}.\]
\end{lemma}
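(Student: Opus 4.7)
The plan is to prove both assertions by direct computation, relying on a single preliminary formula for $\ipr{\vu}{\Chi(X)}$ and then specialising. Observe that the vector $\Chi([d])-2\Chi(A)$ has coordinate $+1$ at every position outside $A$ and $-1$ at every position inside $A$. Pairing with $\Chi(X)$ therefore contributes $+1$ for each element of $X\setminus A$ and $-1$ for each element of $X\cap A$, giving the clean formula
\[
\ipr{\vu}{\Chi(X)} = \frac{1}{\sqrt{d}}\bigl(\card{X\setminus A}-\card{X\cap A}\bigr) = \frac{1}{\sqrt{d}}\bigl(\card{X}-2\card{A\cap X}\bigr).
\]
This identity will be the only computational tool used in the rest of the proof.

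For the first assertion, I would specialise $X=A$ to obtain $\ipr{\vu}{\Chi(A)} = -\card{A}/\sqrt{d}$ and $X=B$ to obtain $\ipr{\vu}{\Chi(B)} = (\card{B}-2\card{A\cap B})/\sqrt{d}$. The desired inequality $\ipr{\vu}{\Chi(A)}\leq\ipr{\vu}{\Chi(B)}$ is then equivalent to $2\card{A\cap B}\leq \card{A}+\card{B}$. This follows from the identity $\card{A}+\card{B} = \card{A\cup B}+\card{A\cap B}$ together with the trivial bound $\card{A\cap B}\leq\card{A\cup B}$; moreover, equality holds if and only if $\card{A\cap B}=\card{A\cup B}$, which forces $A=B$.

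For the second assertion, the same identity applied to $X=B$, $X=C$, and $X=A$ (noting $A\cap A=A$) gives
\[
\sqrt{d}\,\ipr{\vu}{\Chi(B)-\Chi(C)} = \card{B}-2\card{A\cap B}-\card{C}+2\card{A\cap C}
\]
and
\[
\sqrt{d}\,\ipr{\vu}{\Chi(C)-\Chi(A)} = \card{C}-2\card{A\cap C}-\card{A}+2\card{A} = \card{C}-2\card{A\cap C}+\card{A}.
\]
Subtracting the second from the first and clearing the factor $1/\sqrt{d}$, the comparison $\ipr{\vu}{\Chi(B)-\Chi(C)}\geq\ipr{\vu}{\Chi(C)-\Chi(A)}$ becomes
\[
\card{B}-2\card{A\cap B}+4\card{A\cap C} \geq 2\card{C}+\card{A},
\]
which is exactly the claimed inequality $4\card{A\cap C}+\card{B}\geq 2\card{A\cap B}+\card{A}+2\card{C}$. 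There is no genuine obstacle here; the only thing requiring care is bookkeeping of signs in the term $\Chi(C)-\Chi(A)$, where the $A\cap A$ contribution must be identified with $\card{A}$ before simplification.
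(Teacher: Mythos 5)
Your proof is correct, and it is exactly the routine coordinate computation the authors had in mind when they omitted the proof as standard (the paper gives no proof of this lemma). The key identity $\ipr{\vu}{\Chi(X)}=\frac{1}{\sqrt{d}}\bigl(\card{X}-2\card{A\cap X}\bigr)$ and the use of $\card{A}+\card{B}=\card{A\cup B}+\card{A\cap B}$ for the equality case are all verified and complete.
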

\begin{lemma}\label{condprob}
If $A$, $B$, and $C$ are subsets of $[d]$ chosen independently and uniformly, then we have \[\BigProb{4\card{A\cap C}+\card{B}\geq2\card{A\cap B}+\card{A}+2\card{C}}\leq \left(\frac{65}{72}\right)^d<e^{-d/10}.\]
\end{lemma}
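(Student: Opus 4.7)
The plan is to recognize this as a standard large-deviation problem for a sum of i.i.d.\ integer-valued random variables and then apply a Chernoff--Markov argument with a well-chosen exponential parameter. Write $X_i^A, X_i^B, X_i^C \in \{0,1\}$ for the indicators of membership of the element $i\in[d]$ in $A$, $B$, $C$ respectively. These $3d$ variables are mutually independent Bernoulli$(1/2)$'s, and the random quantity inside the probability is a linear function of their products:
\[
Z := 4\card{A\cap C}+\card{B}-2\card{A\cap B}-\card{A}-2\card{C}=\sum_{i=1}^{d} Y_i,
\]
where $Y_i:=4X_i^A X_i^C+X_i^B-2X_i^A X_i^B-X_i^A-2X_i^C$. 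The $Y_i$ are i.i.d., and we want to upper bound $\Prob{Z\geq 0}$.

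The first step would be to determine the distribution of $Y_1$ by evaluating it on each of the $8$ equally likely assignments of $(X_1^A,X_1^B,X_1^C)\in\{0,1\}^3$. A routine enumeration shows that $Y_1$ takes each of the four values $-2,-1,0,1$ with probability $\tfrac{1}{4}$ (in particular, $\Expec{Y_1}=-\tfrac{1}{2}<0$, which is what one wants).

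Next I would apply the standard exponential Markov inequality: for every $t>0$,
\[
\Prob{Z\geq 0}=\Prob{e^{tZ}\geq 1}\leq \Expec{e^{tZ}}=\Expec{e^{tY_1}}^d.
\]
With $x:=e^{t}$ the moment generating function becomes $\Expec{e^{tY_1}}=\tfrac{1}{4}\bigl(x^{-2}+x^{-1}+1+x\bigr)$. The optimal $x$ satisfies $x^3=x+2$; however, one does not need to solve this cubic exactly. The convenient choice $x=3/2$ (that is, $t=\ln(3/2)$) gives
\[
\Expec{e^{tY_1}}=\tfrac{1}{4}\bigl(\tfrac{4}{9}+\tfrac{2}{3}+1+\tfrac{3}{2}\bigr)=\tfrac{65}{72},
\]
which matches the constant in the statement. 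This yields $\Prob{Z\geq 0}\leq(65/72)^d$, and the final inequality $(65/72)^d<e^{-d/10}$ follows by checking numerically that $\ln(72/65)>1/10$.

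The only real content is spotting the magic value $t=\ln(3/2)$; the rest is mechanical. There is no serious obstacle here---the enumeration of the eight cases and the MGF computation are both short, and the probabilistic framework (product measure on $\{0,1\}^{3d}$, Chernoff bound) is entirely standard.
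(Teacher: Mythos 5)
Your proposal is correct and is essentially identical to the paper's proof: the same decomposition into i.i.d.\ per-element contributions uniform on $\{-2,-1,0,1\}$, followed by the exponential Markov bound with $\lambda=e^t=3/2$ giving $(65/72)^d$. All computations check out.
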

\begin{proof}
Let $X$ be the random variable
\[ X = 4\card{A\cap C}+\card{B}-2\card{A\cap B}-\card{A}-2\card{C} = \sum_{i=1}^d X_i,\]
where $X_i$ is the contribution of the element $i\in[d]$ to $X$, that is,
\[ X_i = \begin{cases}
\phantom{-}1 &\text{if\, $i\in B\setminus(A\cup C)$ or $i\in(A\cap C)\setminus B$,}\\
\phantom{-}0 &\text{if\, $i\in A\cap B\cap C$ or $i\notin A\cup B\cup C$,}\\
-1 &\text{if\, $i\in A\setminus(B\cup C)$ or $i\in(B\cap C)\setminus A$,}\\
-2 &\text{if\, $i\in C\setminus(A\cup B)$ or $i\in(A\cap B)\setminus C$.}
\end{cases}\]
Note that \[\Prob{X_i=1}=\Prob{X_i=0}=\Prob{X_i=-1}=\Prob{X_i=-2}=1/4.\]
We now bound $\Prob{X\geq 0}$ from above.
For any $\lambda\geq 1$,
\begin{align*}
&{}\mathrel{\phantom{=}} \Prob{X\geq 0} = \Prob{\lambda^X\geq 1}\\
&\leq \Expec{\lambda^X}= \prod_{i=1}^d\Expec{\lambda^{X_i}}= \left(\frac{\lambda+1+\lambda^{-1}+\lambda^{-2}}{4}\right)^d,
\end{align*}
where we used Markov's inequality and independence.
Set $\lambda=3/2$, which is close to minimizing the right-hand side.
This gives $\Prob{X\geq 0}\leq(65/72)^d$.
\end{proof}
\begin{proof}[Proof of Theorem~\ref{thm:exist}]
Let $m:=\lfloor (1/4)e^{d/20}\rfloor$.
Choose subsets $A_1,\dots,A_{2m}$ randomly and independently from the set $[d]$.
For $i\in[d]$, define $\vp_i=\Chi(A_i)$ and $\vu_i=(1/\sqrt{d})(\Chi([d])-2\Chi(A_i))$.
Let $i,j,k\in[d]$ be distinct.

Assume that $A_i$, $A_j$, $A_k$ are distinct sets.
Then by Lemma~\ref{perp}, $\myangle\vp_i\vp_k\vp_j$ fails to be acute if and only if
\begin{equation}\label{*}
A_i\cap A_j\subseteq A_k\subseteq A_i\cup A_j,
\end{equation}
and condition~\eqref{cond} is violated if and only if
\begin{equation}\label{+}
\ipr{\vu_i}{\Chi(A_i)-\Chi(A_j)} \geq \ipr{\vu_i}{\Chi(A_k)-\Chi(A_j)}
\end{equation}
or
\begin{equation}\label{++}
\ipr{\vu_i}{\Chi(A_k)-\Chi(A_j)} \geq \ipr{\vu_i}{\Chi(A_j)-\Chi(A_i)}.
\end{equation}
Condition~\eqref{+} is equivalent to $\ipr{\vu_i}{\Chi(A_i)}\geq\ipr{\vu_i}{\Chi(A_k)}$.
This, in turn, is equivalent to $A_i=A_k$, by the first statement of Lemma~\ref{condtriple}, contradicting our assumption that $A_i$, $A_j$, $A_k$ are distinct.
By the second statement of Lemma~\ref{condtriple}, \eqref{++} is equivalent to
\begin{equation}\label{**}
4\card{A_i\cap A_j}+\card{A_k}\geq 2\card{A_i\cap A_k}+\card{A_i}+2\card{A_j}.
\end{equation}
Thus, for distinct points $\vp_i$, $\vp_j$, $\vp_k$, at least one of the conditions~\eqref{*} and \eqref{**} holds if and only if $\myangle\vp_i\vp_k\vp_j$ is a right angle or condition~\eqref{cond} is violated.

Note that if some two of the sets coincide, say $A_i=A_k$, then \eqref{*} also holds.
Let us call a triple of distinct numbers $(i,j,k)$ \emph{bad} if at least one of \eqref{*} and \eqref{**} holds.
It follows that if no triple $(i,j,k)$ is bad, then all points $\vp_i$ are distinct, all angles $\myangle\vp_i\vp_j\vp_k$ are acute, and condition~\eqref{cond} is also satisfied.
We will show that with positive probability, some $m$ of the $A_1,\dots,A_{2m}$ will be without bad triples, which will prove the theorem.

By Lemmas~\ref{perpprob} and \ref{condprob} and the union bound, we obtain that
\[\BigProb{(i,j,k)\text{ is bad}}\leq(3/4)^d+e^{-d/10} < 2e^{-d/10}.\]
By linearity of expectation, the expected number of bad triples is at most
\[ 2m(2m-1)(2m-2)2e^{-d/10} < 16m^3e^{-d/10}.\]
In particular, there exists a choice of subsets $A_1,\dots,A_{2m}\subseteq[d]$ with less than $16m^3e^{-d/10}$ bad triples.
For each bad triple $(i,j,k)$, remove $A_i$ from $\set{A_1,\dots,A_{2m}}$.
We are left with more than $2m - 16m^3e^{-d/10}$ sets without any bad triple.
Since $m\leq(1/4)e^{d/20}$ implies that $2m - 16m^3e^{-d/10}\geq m$, we obtain $m$ points $\vp_i$ with unit vectors $\vu_i$ satisfying the theorem.
\end{proof}
\begin{corollary}\label{cor2}
$k'(d)\geq d - O(\log d)$.
\end{corollary}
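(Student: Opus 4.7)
The plan is to combine Theorem~\ref{thm:exist} (the probabilistic construction) with Theorem~\ref{thm:construct} (the geometric embedding) and then optimize the trade-off between ambient dimension and number of points. Theorem~\ref{thm:construct} converts $m$ points and $m$ unit vectors in $\bR^{d'}$ satisfying its hypotheses into the bound $k'(d'+m)\geq m$, while Theorem~\ref{thm:exist} supplies such a configuration whenever $m\leq\lfloor\tfrac{1}{4}e^{d'/20}\rfloor$. So the task reduces to choosing $d'$ and $m$ with $d'+m\leq d$ that maximize $m$ subject to $m\leq\tfrac{1}{4}e^{d'/20}$.

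First I would set $d':=d-m$ and look for the largest $m$ satisfying $m\leq\tfrac{1}{4}e^{(d-m)/20}$, equivalently $d-m\geq 20\log(4m)$. Since $m\leq d$, it suffices to have $d-m\geq 20\log(4d)$, so I would take $m:=d-\lceil 20\log(4d)\rceil$ and $d':=\lceil 20\log(4d)\rceil=O(\log d)$; this choice is feasible for every sufficiently large $d$. Theorem~\ref{thm:exist}, applied in dimension $d'$, then produces $m$ points and $m$ unit vectors in $\bR^{d'}$ meeting the hypotheses of Theorem~\ref{thm:construct}.

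Feeding these into Theorem~\ref{thm:construct} yields $k'(d'+m)\geq m$, and since $d'+m=d$ this is exactly $k'(d)\geq d-O(\log d)$, as required. For the finitely many small values of $d$ where the estimate $d-m\geq 20\log(4m)$ breaks down, Corollary~\ref{cor1} already supplies a linear lower bound that easily absorbs an additive $O(\log d)$ term in the asymptotic statement. There is no real obstacle in this corollary: both the probabilistic ingredient (Theorem~\ref{thm:exist}) and the geometric embedding (Theorem~\ref{thm:construct}) have already done the heavy lifting, and only the routine optimization of the two parameters $d'$ and $m$ remains.
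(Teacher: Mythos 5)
Your proposal is correct and follows essentially the same route as the paper: both combine Theorem~\ref{thm:exist} with Theorem~\ref{thm:construct} and optimize the split of $d$ into an ambient part of size $O(\log d)$ and a part of size $m=d-O(\log d)$, differing only in that you fix $d'=\lceil 20\log(4d)\rceil$ directly while the paper pins down the analogous parameter $n$ via a sandwiching inequality. The only (harmless) implicit step in both arguments is that one may pass to a subset of size $m$ of the $\lfloor\frac14 e^{d'/20}\rfloor$ points supplied by Theorem~\ref{thm:exist}, since its hypotheses concern only triples.
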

\begin{proof}
Let $n$ be the unique integer such that
\[ \lfloor (1/4)e^{n/20}\rfloor + n \leq d < \lfloor(1/4)e^{(n+1)/20}\rfloor + n+1.\]
By Theorems~\ref{thm:exist} and \ref{thm:construct}, $k'(m+n+1)\geq m$ for any $m=2,\dots,\lfloor(1/4)e^{(n+1)/20}\rfloor$.
In particular, if we take $m=d-n-1$, we obtain \[k'(d)\geq d-n-1 > d-20\log(4d) - 1.\qedhere\]
\end{proof}

\subsection*{Acknowledgement}
We thank Endre Makai for a careful reading of the manuscript and for many enlightening comments.

\end{document}